\def\rr{{\mathbb R}}
\def\rn{{{\rr}^n}}
\def\Z{{\mathbb Z}}
\def\z+{{\mathbb Z}_+}
\def\supp{{\rm{\,supp\,}}}
\def\r{\right}
\def\lf{\left}
\def\bint{{\ifinner\rlap{\bf\kern.30em--}
\int\else\rlap{\bf\kern.35em--}\int\fi}\ignorespaces}
\def\sbint{{\ifinner\rlap{\bf\kern.32em--}
\hspace{0.078cm}\int\else\rlap{\bf\kern.45em--}\int\fi}\ignorespaces}
\newtheorem{theorem}{Theorem}[section]
\newtheorem{lemma}[theorem]{Lemma}
\theoremstyle{definition}
\newtheorem{remark}[theorem]{Remark}
\newtheorem{definition}[theorem]{Definition}
\numberwithin{equation}{section}
\numberwithin{equation}{section}
\numberwithin{equation}{section}
\begin{document}

\arraycolsep=1pt

\title{\Large\bf New Atomic Decompositions of Weighted Local Hardy Spaces
\footnotetext{\hspace{-0.35cm} {\it 2020
Mathematics Subject Classification}: {42B30, 42B20, 47B37.} 
\endgraf{\it Key words and phrases}: local Hardy space, Calder\'on-Zygmund operators,  atomic decomposition, weight.
\endgraf This project is supported by NSFC (Nos.\,12261083,\,12161083).
\endgraf $^\ast$\,Corresponding author.
}}
\author{Haijing Zhao, Xuechun Yang, Baode Li$^\ast$}
\date{ }
\maketitle

\vspace{-0.8cm}

\begin{center}
\begin{minipage}{13cm}\small
{\noindent{\bf Abstract.}
 We introduce  a new class of weighted local approximate atoms including classical weighted local atoms. Then we further  obtain the  weighted local approximate atomic decompositions  of weighted local Hardy spaces $h_{\omega} ^p(\rn)$  with $0<p\leq 1$ and weight $\omega\in A_1(\rn)$. As  an application, we prove the boundedness of   inhomogeneous Calder\'on-Zygmund  operators on $h_{\omega}^p(\rn)$ via weighted local approximate atoms and molecules.}

\end{minipage}
\end{center}


\section{Introduction\label{s1}}
\quad It is well known that the Hardy space $H ^p (\rn)$ is a good substitute of $L^ p (\rn )$ when $0<p \leq 1$
in pure Fourier analysis \cite{H1972}. However, as Goldberg in \cite{g79} pointed out that $H ^p (\rn )$
space is well suited only to the Fourier analysis, and is not stable under multiplications by the Schwartz test functions. The one reason is that $H ^p (\rn )$ does not contain $\mathcal{S}(\rn)$,
the space of the Schwartz test functions. To circumvent those drawbacks, Goldberg in  \cite{g79}
introduced the local Hardy spaces $h ^p (\rn )$, $0 < p < \infty$. The theory of local Hardy space plays an important role in various fields of
analysis and partial differential equations; see \cite{NWF}, \cite{nwf2} and \cite{ff1993}. In particular, pseudo-differential operators are bounded on local Hardy spaces $h^ p $ for
$0 < p \leq 1$, but they are not bounded on Hardy spaces $H ^p$ for $0 < p \leq 1$; see \cite{g79}.
 Bui \cite{hb81} further studied the weighted version  of the local Hardy space  considered by Goldberg \cite{g79}, where the weight is assumed
to satisfy the condition $(A_\infty)$ of Muckenhoupt.

The theory of  Calder\'on-Zygmund singular integrals began in the 1950s when
 Calder\'on and Zygmund studied convolution operators appeared in elliptic partial differential equations with constant coefficients. To deal with partial differential equations with variable coefficients, Calder\'on and Zygmund introduced the so called second generation of Calder\'on-Zygmund singular integrals.  These operators are not convolution operators, but kernels of
these operators are given by the form $K(x,x-y)$. In 1965, Calder\'on considered the minimal regularity problem of the symbolic calculus, which  opened the door of
the study for the third generation of Calder\'on-Zygmund singular integrals. Precisely, the
operator $T$ is said to be in the {\it third generation of Calder\'on-Zygmund singular integrals} if $T$ is
a continuous linear operator from $\mathcal{S}(\rn)$ to $\mathcal{S}^{\prime}(\rn)$ defined by
$$
\langle Tf, g \rangle=\iint_{{\rn}\times {\rn}}K(x,y)f(y)g(x)dxdy
$$
for all $f$, $g \in \mathcal{S}^(\rn)$  with disjoint supports, where $K(x,y)$, the kernel of $T$, is a locally
integrable function defined on ${{\rn}\times {\rn}}$ away from the diagonal $x = y$. Moreover, $K(x,y)$
satisfies the following conditions: for $\delta \in (0,1]$ and a constant $C > 0$,
\begin{align}\label{e0.66}
|K(x, y)| \leq  \frac{C}{|x-y|^n}  \quad \mathrm {for} \quad x \neq y
\end{align}
and
\begin{align}\label{e01.66}
|K(x, y)-K(x, z)|+|K(y, x)-K(z, x)| \leq C\frac{|y-z|^{\delta}}{|x-z|^{n+\delta}}   \quad \mathrm{for}\,\, \mathrm{all}\,\, |x-z| \geq 2|y-z|.
\end{align}
 It is easy to see that dilations of the kernel, $ \delta ^n K(\delta x,\delta y)$ with $\delta > 0$, satisfy the same
estimates above with the same bounds.
In 2020,  Ding-Han-Zhu \cite{dhz21} introduced {\it inhomogeneous Calder\'on-Zygmund operators $T^{\mu}_{\delta}$}, which the operator  $T^{\mu}_{\delta}$  satisfies similar
conditions as mentioned for the third generation of Calder\'on-Zygmund singular integrals, but
$K(x,y)$ also satisfies a restrictive size condition, namely, the condition \eqref{e0.66} is replaced by
$$
|K(x, y)| \leq C \min \left\{\frac{1}{|x-y|^n}, \frac{1}{|x-y|^{n+\mu}}\right\},  \quad \mathrm{for}\,\, \mathrm{some} \,\, \mu > 0 \,\,\mathrm{and}\,\, x \neq y.
$$
We remark that the above condition for kernels of operators appeared when one studied pseudo-differential operators; see \cite{mc97, ff1993}. They  obtained that  the operator $T^{\mu}_{\delta}$ is  bounded on local Hardy spaces $h^p(\rn)$    with $\max(\frac{n}{n+\mu},\frac{n}{n+\delta})<p\leq1$ when it satisfies  a $n(\frac{1}{p}-1)$-Lip type condition.
The operator  $T^{\mu}_{\delta}$  satisfies similar
conditions  for the third generation of Calder\'on-Zygmund singular integrals, as the same time  the invariant property of dilations  for the third generation of Calder\'on-Zygmund singular integrals does not hold for
inhomogeneous Calder\'on-Zygmund singular integrals. Another difference between these
two kinds of singular integrals is that the third generation of Calder\'on-Zygmund singular
integrals have singularities at the diagonal $x = y$ and the infinity, but inhomogeneous Calder\'on-Zygmund singular integrals have the singularity at the diagonal $x = y$ only. In 2023, Tan-Chen \cite{jcjubuhd23} further obtained that the operator  $T^{\mu}_{\delta}$  is bounded on weighted local Hardy spaces $h^p_{\omega}(\rn)$ with $0<p<\infty$ when it satisfies some cancellation conditions. However, the cancellation conditions  are strict for the operator  $T^{\mu}_{\delta}$ of non-convolution type. In 2022, Dafni-Lau-Picon-Vasconcelos \cite{2wqqk} established  the approximate atomic decompositions of local Hardy spaces, which including classical atomic decompositions of local Hardy spaces. They further obtained the boundedness of the operator  $T^{\mu}_{\delta}$ on local Hardy spaces $h^p(\rn)$ with  $0<p\le1$ when  it satisfies an appropriate cancellation condition   including cancellation condition as special case.

Inspired by \cite{jcjubuhd23} and \cite{2wqqk},  we will introduce weighted local approximate atoms with the   cancellation condition of classical atoms, i.e.,
$$
\int_\rn a(x)x^\alpha dx=0 \quad \ \,\,\mathrm{with}\ |\alpha|\leq s, \,\,s\geq[n(\frac{1}{p}-1)]
$$
replaced by  ball-control conditions , i.e.,
$$
  \begin{cases}\left|\int_{\mathbb{R}^n} a(x) (x-x_B)^{\alpha }d x\right| \leq C\left(\frac{\left|B\right|}{\omega(B)}\right)^{\frac{1}{p}}&\text{if}\quad |\alpha|<\gamma_p:=n(\frac{1}{p}-1)\neq s\\\left|\int_{\mathbb{R}^n} a(x) (x-x_B)^{\alpha }d x\right| \leq C\left(\frac{\left|B\right|}{\omega(B)}\right)^\beta \omega(B)^\eta&\text{if}\quad|\alpha|=\gamma_p=s\end{cases},
$$
where weight $\omega \in A_1(\rn)$ and $\beta:=\eta+\frac{1}{p}$. We will discuss the equivalence of weighted local Hardy spaces defined via approximate atoms  with classical  weighted local Hardy spaces.
The proof idea comes from that of unweighted  case of Dafni-Lau-Picon-Vasconcelos \cite[Proposition 3.3]{2wqqk}. The main difficulty we overcame is that the weighted Lebesgue measure no longer satisfies the translation invariance property of Lebesgue measure, i.e., for any measurable set  $E\subset\rn$ and $x\in \rn$,  $|E|=|E+x|$ but $\omega(E)=\omega(E+x)$ can not be used for weight $\omega$; see the proof of Theorem \ref{m2.36} for more details.
Furthermore, we will introduce approximate  molecules including classical molecules and obtain that they are bounded uniformly on $h^p_w(\rn)$  norm with $0<p\leq1$.
As an application, we will show the operator  $T^{\mu}_{\delta}$ is bounded on $h^p_w(\rn)$ when it satisfies  ball-control conditions, which is weaker than the classical cancellation condition.

This article is organized as follows. In Sect. 2, we  introduce weighted local approximate atoms, i.e.,  the   cancellation condition of classical atoms replaced by  ball-control conditions (see Definition \ref{d2.33}). And we discuss the equivalence of weighted local Hardy spaces defined via approximate atoms  with classical  weighted local Hardy spaces (see Theorem \ref{m2.36}). In Sect. 3, we  introduce approximate  molecules, i.e.,  the   cancellation condition of classical molecules replaced by  ball-control conditions including classical molecules (see Definition \ref{d2.11}).  Then we  obtain that they are bounded uniformly on $h^p_w(\rn)$  norm with $0<p\leq1$ (see Theorem \ref{d3.66}).  As an application, in Sect. 4, we obtain  the boundedness of the operator  $T^{\mu}_{\delta}$  on $h^p_w(\rn)$ with $\omega\in A_1(\rn)$ and $0<p\leq 1$.

It  leaves an open problem:  is it still holds true for the approximate atomic decompositions of $h^p_\omega(\rn)$ when weight $\omega\in A_1(\rn)$ replaced by $\omega\in A_\infty(\rn)?$

\section*{ Notations}
\ \,\ \ $-Z_+:$$=\{1,2,3 \cdots \}$;

$-\mathbb{N}:$$ =\{ 0, 1, 2, 3, \cdots\}$;

$-C:$ a positive constant which is independent of the main parameters but may vary from line to line;

$-\lesssim:$ $f\le Cg$ is denoted by $f\lesssim g$;

$-q':$ The conjugate exponent of $q$ with $q\in[1,\infty]$, namely, $1 / q+1 / q ^{\prime}=1$;

$-[t]:$ the biggest integer no more than $t\in \mathbb{R}$;

$-\mathbb{R}^{n}:$ $n$-dimensional Euclidean space;

$-B(x_B,r):$$=\{x:|x-x_{B}|<r\}$;

$-B:$ a  ball in \ $\mathbb{R}^n$;

$-|B|:$ the Lebesgue measure of $B$;

$-\chi_{E}:$ the characteristic function of $E$;

$-\mathcal{S}(\rn):$ the space of Schwartz functions;

$-\mathcal{S}^{\prime}(\rn):$ the space of tempered distributions.




\section{$h_w^p (\mathbb{R}^n )=h_{w,approx}^{p, q, s} (\mathbb{R}^n)$}
Throughout this article, a weight $\omega $ is a non-negative locally integrable function on $\rn $, we shall denote $\omega(E):=\int_E \omega(x) d x$ for any set $E \subset \mathbb{R}^n$.
We first recall the weight class of Muckenhoupt.
\begin{definition}\label{d2.4}
 For $1<q<\infty$, a locally integrable function $\omega: \mathbb{R}^n \rightarrow[0, \infty)$ is said to be an {\it $ A_q$ weight } if
$$
[\omega]_{A_q}:=\sup _B\left(\frac{1}{|B|} \int_B \omega(x) \mathrm{d} x\right)\left(\frac{1}{|B|} \int_B \omega(x)^{\frac{1}{1-q}} \mathrm{~d} x\right)^{q-1}<\infty .
$$
A locally integrable function $\omega: \mathbb{R}^n \rightarrow[0, \infty)$ is said to be an {\it $ A_1$ weight }  if
$$
[\omega]_{A_1}:=\sup _B \frac{1}{|B|}\left(\int_B \omega(x) \mathrm{d} x\right){\underset{x \in B}{\operatorname{ess} \sup }[\omega(x)]^{-1}}<\infty.
$$
Define $ A_\infty(\rn):=\bigcup_{q\in[1,\infty)}A_q(\rn).$

For any given $\omega \in A _q(\rn)$, define the {\it critical index of $\omega$ } by
\begin{align*}
q_\omega: =\inf\{q\in[1,\infty ): \omega\in A_q(\rn)\}.
\end{align*}
\end{definition}
Given a weight function $\omega\in A_\infty(\rn)$. For $1\le q<\infty$, the {\it weighted Lebesgue space} $L^q_\omega(\rn)$ is the space of functions $f$ such that
$$\|f\|_{L^q_\omega}:=\lf(\int_{\rn} |f(x)|^q\omega(x)\,dx\r)^{\frac{1}{q}}<\infty.$$

\begin{lemma}\label{l3.16}
\begin{enumerate}
\item [(i)]\cite[Theorem 2.1(iv)]{at86} Let $q\geq1$, $w\in A_{q}(\rn)$ and $k\in Z_{+}$. Then
 \begin{align}\label{e2.16}
2^{\frac{kn}{q}}\omega(B(x_{B}, r))\leq \omega(B(x_{B},2^{k}r))\leq C2^{nqk}\omega(B(x_{B}, r)),
\end{align}
where $C$ is a positive constant independent of $r$, $q$ and $x_{B}$.
\item [(ii)]
\cite[Lemma 1.1]{jcc85}  Let $q\in[1, \infty)$. If $\omega\in A_{q}(\rn)$,
then
$$\frac{1}{|B(x_{B},r)|}\int_{B(x_{B},r)}|f(x)|dx\leq\left(\frac{C}{\omega(B(x_{B},r))}\int_{B(x_{B},r)}|f(x)|^{q}\omega(x)dx\right)^\frac{1}{q},
$$
\end{enumerate}
where $C$ is a positive constant  independent of $r$ and $x_{B}$.
\end{lemma}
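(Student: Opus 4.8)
Both estimates are classical properties of Muckenhoupt weights, and in a final version I would simply cite \cite{at86} and \cite{jcc85} as the excerpt already does; for completeness I indicate the standard derivations, whose common engine is H\"older's inequality paired with the defining $A_q$ inequality of Definition \ref{d2.4}.

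For part (ii) the plan is to split $|f| = (|f|^q\omega)^{1/q}\,\omega^{-1/q}$ and apply H\"older's inequality on $B:=B(x_B,r)$ with exponents $q$ and $q'$, which gives $\int_B|f|\,dx \le (\int_B|f|^q\omega\,dx)^{1/q}(\int_B\omega^{1/(1-q)}\,dx)^{1/q'}$, since $-q'/q = 1/(1-q)$. Rewriting the $A_q$ condition as $(\frac{1}{|B|}\int_B\omega^{1/(1-q)}\,dx)^{q-1}\le [\omega]_{A_q}\,|B|/\omega(B)$ then bounds the last factor by $[\omega]_{A_q}^{1/q}\,|B|\,\omega(B)^{-1/q}$, and dividing through by $|B|$ yields the claim with $C=[\omega]_{A_q}$. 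The endpoint $q=1$ (where $q'=\infty$) I would treat separately via the $A_1$ condition directly: since $\mathop{\mathrm{ess\,inf}}_{x\in B}\omega(x)\ge [\omega]_{A_1}^{-1}\,\omega(B)/|B|$, we have $\int_B|f|\omega\,dx\ge(\mathop{\mathrm{ess\,inf}}_{x\in B}\omega)\int_B|f|\,dx$, which rearranges to the desired inequality.

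For part (i), the right-hand (doubling) inequality comes from the same computation applied to $f\equiv 1$: for the concentric balls $B=B(x_B,r)\subset B'=B(x_B,2^kr)$ one obtains the set estimate $(|B|/|B'|)^q\le [\omega]_{A_q}\,\omega(B)/\omega(B')$, and since $|B'|/|B|=2^{kn}$ this rearranges to $\omega(B')\le [\omega]_{A_q}2^{nqk}\omega(B)$, i.e. the stated bound with $C=[\omega]_{A_q}$. For the left-hand (reverse-doubling) inequality I would apply the analogous set estimate $(|E|/|B'|)^q\le [\omega]_{A_q}\,\omega(E)/\omega(B')$ to the dyadic annulus $E=B'\setminus B$: because $|E|/|B'| = 1-2^{-kn}$, this produces a one-step gain $\omega(B)/\omega(2B)\le\gamma$ for some $\gamma=\gamma([\omega]_{A_q},q)<1$, and iterating over the $k$ shells between $B$ and $B'$ gives an exponential lower bound $\omega(B')\ge\gamma^{-k}\omega(B)$ of the asserted type.

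The genuinely delicate step is this last reverse-doubling bound. The trivial monotonicity $\omega(B)\le\omega(B')$ only gives exponent $0$, so the entire content lies in producing a gain that grows geometrically in $k$; moreover, matching the precise exponent $n/q$ (rather than some unspecified $\varepsilon>0$) requires care in tracking how the one-step constant $\gamma$ depends on $[\omega]_{A_q}$ and $q$, and this is exactly the point where I would lean on the sharp computation in \cite{at86}. Everything else reduces to routine bookkeeping around the dual exponent $1/(1-q)$ and the $q=1$ endpoint.
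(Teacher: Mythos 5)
Your part (ii) and the right-hand (doubling) half of (i) are fine, and they are exactly the standard computations behind the paper's citations; note that the paper itself gives no proof of this lemma at all, it only points to \cite{at86} and \cite{jcc85}, so there is nothing to compare against beyond the references. Concretely, your H\"older split $|f|=(|f|^q\omega)^{1/q}\omega^{-1/q}$ together with the rewritten $A_q$ condition yields (ii) with $C=[\omega]_{A_q}$, your separate $q=1$ endpoint via $\operatorname{ess\,inf}_{B}\omega\ge[\omega]_{A_1}^{-1}\omega(B)/|B|$ is correct, and applying (ii) to $f=\chi_B$ inside $B'=B(x_B,2^kr)$ gives the upper bound $\omega(B')\le[\omega]_{A_q}2^{knq}\omega(B)$ as you say.

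The genuine gap is the left-hand inequality of (i), and it is not, as you suggest, a matter of tracking the dependence of your one-step constant $\gamma$ on $[\omega]_{A_q}$ more sharply: the inequality $2^{kn/q}\omega(B(x_B,r))\le\omega(B(x_B,2^kr))$ is \emph{false} for general $\omega\in A_q(\rn)$, so the ``sharp computation'' you plan to import from \cite{at86} does not exist. Fix $q\in[1,\infty)$ and pick $\alpha\in\left(-n,\,n\left(\frac{1}{q}-1\right)\right)$ (for $q=1$ this is any $\alpha\in(-n,0)$); then $\omega(x)=|x|^{\alpha}\in A_1(\rn)\subset A_q(\rn)$, and for balls centered at the origin $\omega(B(0,\rho))=c_{n,\alpha}\,\rho^{n+\alpha}$, so that $\omega(B(0,2^kr))=2^{k(n+\alpha)}\omega(B(0,r))$ with $n+\alpha<\frac{n}{q}$. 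Hence the claimed lower bound fails for all large $k$, and since $2^{k(n+\alpha)}/2^{kn/q}\to0$, no multiplicative constant can repair it (as printed the inequality even demands constant $1$). Your annulus argument is the correct and essentially optimal substitute: it gives reverse doubling $\omega(B(x_B,2^kr))\ge\gamma^{-k}\omega(B(x_B,r))$ with $\gamma=1-[\omega]_{A_q}^{-1}(1-2^{-n})^{q}<1$, i.e.\ a positive exponent that necessarily depends on the weight and degenerates as $\alpha\downarrow-n$ in the example above, exactly as it must. What \cite{at86} actually supplies is the subset estimate $\omega(E)/\omega(B)\ge[\omega]_{A_q}^{-1}(|E|/|B|)^{q}$ for $E\subset B$ (your part (ii) applied to $\chi_E$) together with the doubling upper bound, not the stated lower bound. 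So the step you flagged as delicate and deferred is a genuine hole, but it is a hole in the lemma as stated rather than in your argument; this is not pedantry, since the paper later invokes precisely this false lower bound (e.g.\ the factor $2^{k[n(1-\frac{1}{q})+|\alpha|-\frac{\lambda}{q}]}$ in the estimate of $\mathrm{K}_2$ in the proof of Lemma \ref{m3.1} requires $\omega(B(x_B,2^kr))\gtrsim 2^{kn}\omega(B(x_B,r))$), where only the weight-dependent reverse doubling is actually available.
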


The weighted local Hardy spaces, introduced by Bui \cite {hb81}, can be given by several definitions. As follows, we choose to use the definition via maximal function.
\begin{definition}\label{d2.5}
Let $ \omega \in A_{\infty}(\rn) $ and let $\phi$ be a fixed Schwartz function in $\mathcal{S}(\rn)$ such that $\int_{\rn}\phi(x)dx\neq 0$, and set $\phi_{t}(x):=t^{-n}\phi(t^{-1}x)$. Then we say that a tempered distribution $f \in \mathcal{S}^{\prime}(\rn)$ lies in  $ h^p_\omega(\rn)$ if $\|f\|_{h^p_\omega}=\left\|m_\phi f\right\|_{L^p_\omega}<\infty$,
where
\end{definition}
$$
\begin{aligned}
m_{\phi}f(x):= \sup\limits_{0<t<1}|f\ast\phi_{t}(x)|\in{L^p_\omega(\rn)}, \quad x\in\rn.
\end{aligned}
$$

The spaces are independent of the choice of $\phi$, and different choices give comparable $\|\cdot\|_{h_{\omega}^{p}}$. For $p\geqslant1$ this defines a norm making $h_{\omega}^{p}(\rn)$ a Banach space and when $p>1$ it is equal to $L_{\omega}^{p}(\rn)$ with equivalent norms. For $0<p\leq1$, the space $h_{\omega}^{p}(\rn)$ is a complete metric space with the distances $d(f,g)=\parallel f-g\parallel_{h_{\omega}^{p}}^p$. Although $h_{\omega}^{p}(\rn)$ is not locally convex for $0<p<1$ and $\| f\|_{h_{\omega}^{p}}$ is a quasi-norm, we will still refer to the functional $\| f\|_{h_{\omega}^{p}}$ as the $h_{\omega}^{p}$ norm for simplicity.

%

Give $0<p\leq 1$,  let $$\gamma_p:=n\lf(\frac{1}{p}-1\r), s:=[\gamma_p].$$
Approximate atoms are introduced by Dafni-Lau-Picon-Vasconcelos  \cite{2wqqk}, weighted approximate atoms are further introduced in the following Definition \ref{d2.33}(ii). Instead, our ball-control conditions intrinsically related to the value of
$0 < p \leq 1$ in the following way: if $p \neq\frac{n}{n+k}$ for every $k\in\Z_{+}$, namely $s\neq \gamma_p$, it suffices to bound the moments up order $|\alpha|\leq s$ by a ball-control condition; on the other hand, if $p=\frac{n}{n+k}$, another a ball-control condition is needed when $|\alpha|=s=\gamma_p$.
\begin{definition}\label{d2.33}
\begin{enumerate}
\item [(i)]
Let $0<p\leq 1\leq q\leq\infty$ and $p\neq q $ such that $\omega \in A_q(\rn)$ with critical index $q_\omega$. For $s_0\in \mathbb{N}$ with $s_0\geq[n(\frac{q_\omega}{p}-1)]$,
a real-valued function  $a$  on $\rn$ is  called a {\it$(p,q, s_0, \omega )$-atom} if it satisfies the following ($A_1$), ($A_2$) and ($A_3$):
\begin{enumerate}
  \item [$(A_1)$]$\supp(a)\subset B(x_B, r)$,
\item [$(A_2)$]$\|a\|_{L_{\omega}^{q}}\leq \omega(B(x_{B}, r))^{\frac{1}{q}-\frac{1}{p}}$,
\item [$(A_3)$] if $r<1$, $\int_\rn a(x)x^\alpha dx=0$ for every multi-index $\alpha$ with $|\alpha|\leq s$, or if $r\geq1$, then $a$ does not have any vanishing moment.
\item [(ii)]A real-valued function $a$ on $\rn$ is call a {\it $(p, q, s,\omega)$-approximate atom } with $\omega\in A_1(\rn)$ and $s:=[\gamma_p]$ if there exists a ball $B(x_B,r) \subset \mathbb{R}^n$  and $\eta>0$ such that $a$ satisfies ($A_1$), ($A_2$)  and  the following ($A\acute{_3}$) when  $ r<1 $:
   \item  [$(A\acute{_3})$]$   \begin{cases}\left|\int_{\mathbb{R}^n} a(x) (x-x_B)^{\alpha }d x\right| \leq C\left(\frac{\left|B\right|}{\omega(B)}\right)^{\frac{1}{p}}&\text{if}\quad |\alpha|<\gamma_p\neq s\\\left|\int_{\mathbb{R}^n} a(x) (x-x_B)^{\alpha }d x\right| \leq C\left(\frac{\left|B\right|}{\omega(B)}\right)^\beta \omega(B)^\eta&\text{if}\quad|\alpha|=\gamma_p=s\end{cases}$,\\
   where $\beta:=\eta+\frac{1}{p}$, or
   $a$ satisfies $(A_1)$ and $(A_2)$ when  $ r\geq1 $.
\end{enumerate}
\end{enumerate}
\end{definition}

\begin{remark}\label{re1.1}
\begin{enumerate}
\item [(i)]When  $C=0$, the $(p, q,s, \omega)$-approximate atom is also a $(p,q, s,\omega)$ atom as in  Definition $\ref{d2.33}$(i).
\item [(ii)] Obviously, $(A_3)$ implies $(A\acute{_3})$ and hence any local weighted $(p, q, s, \omega)$-atom $a$ is also a local weighted $(p,q,s, \omega)$-approximate atom with $\omega \in A_1(\rn)$.
\item [(iii)]When $\omega$ $\equiv 1$, the $(p, q,s, \omega)$-approximate atom is also a $(p,q,\omega)$ approximate atom as in \cite[Definition 3.2]{2wqqk}.
\item [(iv)] If $r>1$, ball-control conditions  of $(A\acute{_3})$ can be deduced by ($A_1$) and ($A_2$). In fact,
by H\"older's  inequality, $(A_1)$ and $(A_2)$ of Definition \ref{d2.33} and Definition \ref{d2.4},  we obtain
\begin{align}\label{e1.1}
&\left|\int_{\mathbb{R}^n} a(x) (x-x_B)^{\alpha }d x\right| \\\nonumber
&\qquad\leq r^{|\alpha|}\int_{B}|a(x)|\omega(x)^{\frac{1}{q}}\omega(x)^{-\frac{1}{q}}dx\\\nonumber
&\qquad\leq r^{|\alpha|}\left(\int_{B} |a(x)|^q \omega(x)dx\right)^{\frac{1}{q}} \left(\int_{B} \omega(x)^{-\frac{1}{q-1}}dx\right)^{1-\frac{1}{q}}\\\nonumber
&\qquad\leq [\omega]_{A_q}r^{|\alpha|}\omega(B)^{\frac{1}{q}-\frac{1}{p}}\frac{|B|}{\omega(B)^{\frac{1}{q}}}\\\nonumber
&\qquad\leq [\omega]_{A_q}r^{|\alpha|+n}{\omega\left(B\left(x_B, r\right)\right)}^{-\frac{1}{p}}.
\end{align}

Furthermore, when $|\alpha|<\gamma_p$, by \eqref {e1.1} and $r>1$, we obtain
\begin{align}\label{e2.112}
&\left|\int_{\mathbb{R}^n} a(x) (x-x_B)^{\alpha }d x\right|
\leq [\omega]_{A_q}r^{|\alpha|+n}{w\left(B\left(x_B, r\right)\right)}^{-\frac{1}{p}}\\\nonumber
&\quad\quad\leq [\omega]_{A_q}r^{n+n(\frac{1}{p}-1)}{\omega\left(B\left(x_B, r\right)\right)}^{-\frac{1}{p}}
\lesssim\left(\frac{\left|B\right|}{\omega(B)}\right)^{\frac{1}{p}}\nonumber.
\end{align}

When $|\alpha|=\gamma_p$ and $\beta=\eta+\frac{1}{p}$, by \eqref {e1.1}, $\gamma_p=n(\frac{1}{p}-1)$ and $r>1$,  we have
\begin{align}\label{e2.113}
&\left|\int_{\mathbb{R}^n} a(x) (x-x_B)^{\alpha }d x\right|
\leq[\omega]_{A_q} r^{|\alpha|+n}{\omega \left(B\left(x_B, r\right)\right)}^{-\frac{1}{p}}\\\nonumber
&\quad\quad\leq [\omega]_{A_q}r^{n+n(\frac{1}{p}-1)}{\omega\left(B\left(x_B, r\right)\right)}^{-\frac{1}{p}}
\leq [\omega]_{A_q}r^\frac{n}{p}{\omega (B\left(x_B, r\right))}^{-\frac{1}{p}}\\\nonumber
&\quad\quad\lesssim\left(\frac{\left|B\right|}{\omega(B)}\right)^\beta \omega(B)^\eta\nonumber.
\end{align}
Combining the above estimates, we obtain that $(A\acute{_3})$  holds when $r>1$.
\end{enumerate}
\end{remark}
\begin{definition}
\begin{enumerate}
\item [(i)]Let $\omega \in A_{\infty}$ and $0<p\leq 1\leq q\leq{\infty}$, $s_0\geq[n(\frac{q_\omega}{p}-1)]$. The {\it weighted atomic local Hardy space $h_\omega^{p, q, s_0}\left(\mathbb{R}^n\right)$} is defined to be the set of all $f \in \mathcal{S}^{\prime}\left(\mathbb{R}^n\right)$ satisfying that $f=\sum_{i=0}^{\infty} \lambda_i a_i$ in $\mathcal{S}^{\prime}\left(\mathbb{R}^n\right)$, where $\left\{\lambda_i\right\}_{i \in \mathbb{N}_0} \subset \mathbb{R}, \sum_{i=0}^{\infty}\left|\lambda_i\right|^p<\infty$ and $\left\{a_i\right\}_{i \in \mathbb{N}}$ are $(p, q, s_0, \omega)$-atoms. Moreover, the quasi-norm of $f \in h_\omega^{p, q, s_0}\left(\mathbb{R}^n\right)$ is defined by
$$
\|f\|_{h_\omega^{p, q, s_0}} \equiv \inf \left\{\left[\sum_{i=0}^{\infty}\left|\lambda_i\right|^p\right]^{1 / p}\right\},
$$
where the infimum is taken over all the decompositions of $f$ as above.
\item [(ii)]When local weighted $(p,q,s_0,\omega)$-atoms replaced by  local weighted $(p,q,s,\omega)$-approximate atoms with $\omega\in A_1$,
$h^{p,q,s}_{w,approx}(\rn)$ can be also defined as $h^{p,q,s_{0}}_w(\rn)$.
\end{enumerate}
\end{definition}

\begin{theorem}\label{m2.36}
Let $\omega \in A_1(\rn)$, $0<p\leq1\leq q<\infty$ with $p\neq q$ and $s:=[n(\frac{1}{p}-1)]$. Then there exist $C>0$ such that, for any $(p,q,s,\omega)$-approximate atom $a$, $\|a\|_{h_w^{p}}\leq C.$
\end{theorem}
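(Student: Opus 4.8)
The plan is to bound the $p$-th power of the norm directly, that is, to estimate $\|a\|_{h^p_\omega}^p=\int_\rn (m_\phi a(x))^p\,\omega(x)\,dx$ by a constant independent of the atom $a$, where $B:=B(x_B,r)$ is its defining ball. Since $\phi\in\mathcal S(\rn)$ is a fixed Schwartz function, the local maximal function is dominated pointwise by the Hardy--Littlewood maximal function, $m_\phi a\lesssim Ma$. I would split $\rn=2B\cup(\rn\setminus 2B)$ and treat the two pieces separately, the cancellation encoded in $(A\acute{_3})$ being needed only for the far piece.

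For the near region I apply H\"older's inequality with exponent $q/p>1$ together with the weighted boundedness of $M$ (the strong $(q,q)$ inequality when $q>1$, and a Kolmogorov-type truncation via the weak $(1,1)$ inequality when $q=1$, both available because $\omega\in A_1\subset A_q$). Combined with the doubling estimate $\omega(2B)\lesssim\omega(B)$ from Lemma \ref{l3.16}(i) and the size condition $(A_2)$, this yields
$$\int_{2B}(m_\phi a)^p\omega\lesssim\|a\|_{L^q_\omega}^p\,\omega(2B)^{1-p/q}\lesssim\omega(B)^{(\frac1q-\frac1p)p}\,\omega(B)^{1-\frac pq}=1,$$
so the near region contributes a uniform constant.

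For the far region with $r<1$, I fix $x\notin 2B$, set $d:=|x-x_B|\ge 2r$, and Taylor expand $y\mapsto\phi_t(x-y)$ about $y=x_B$ to order $s$. This produces the moment terms
$$\sum_{|\alpha|\le s}\frac{(-1)^{|\alpha|}}{\alpha!\,t^{n+|\alpha|}}(\partial^\alpha\phi)\Big(\tfrac{x-x_B}{t}\Big)\,m_\alpha,\qquad m_\alpha:=\int_B a(y)(y-x_B)^\alpha\,dy,$$
plus an order-$(s+1)$ remainder. The remainder is handled using $|y-x_B|^{s+1}\le r^{s+1}$, the rapid decay of $\partial^{s+1}\phi$, and the bound $\|a\|_{L^1}\lesssim|B|\,\omega(B)^{-1/p}$ obtained from $(A_2)$ via Lemma \ref{l3.16}(ii); the moments $m_\alpha$ are controlled by the ball-control conditions $(A\acute{_3})$. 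Because $0<t<1$, the Schwartz decay forces a genuine split of $\rn\setminus 2B$ into $\{2r\le d<1\}$, where taking the supremum in $t$ gives factors $\lesssim d^{-(n+|\alpha|)}$ (resp. $d^{-(n+s+1)}$ for the remainder), and $\{d\ge1\}$, where one gains an arbitrarily large power $d^{-N}$. Summing over the dyadic annuli $\{2^kr\le d<2^{k+1}r\}$ and invoking Lemma \ref{l3.16}(i) to compare $\omega(B(x_B,2^kr))$ with $2^{kn}\omega(B)$, every term except one turns out to be uniformly bounded; the case $r\ge1$ requires no cancellation and follows from the same decay estimate alone.

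The main obstacle is the endpoint multi-index $|\alpha|=\gamma_p=s$ (which occurs precisely when $\gamma_p$ is an integer). There the exponent $n-(n+|\alpha|)p$ governing the annular sum vanishes, so the first ball-control bound $|m_\alpha|\lesssim(|B|/\omega(B))^{1/p}$ only produces a logarithmically divergent sum of order $\log(1/r)$, which is \emph{not} uniformly bounded as $r\to0$. This is exactly where the second, strictly stronger ball-control condition $|m_\alpha|\lesssim(|B|/\omega(B))^\beta\omega(B)^\eta$ with $\beta=\eta+\frac1p$ and $\eta>0$ is indispensable: it improves the moment prefactor by a factor $r^{n\eta p}$, turning the estimate into $r^{n\eta p}\log(1/r)$, which tends to $0$ and is therefore bounded. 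A secondary but pervasive difficulty, the one flagged in the introduction, is that the weighted measure is not translation invariant: the exact scaling $|2^kB|=2^{kn}|B|$ exploited in the unweighted argument of \cite{2wqqk} must everywhere be replaced by the two-sided doubling comparison of Lemma \ref{l3.16}(i), and all moment estimates must be kept in the scale- and center-sensitive form involving $|B|/\omega(B)$ rather than absorbed into translation-invariant constants.
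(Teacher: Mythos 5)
Your proposal is correct and follows essentially the same route as the paper's proof: the same splitting into $2B$ and $(2B)^c$ with H\"older plus the weighted maximal estimates (strong $(q,q)$ for $q>1$, Kolmogorov truncation via weak $(1,1)$ for $q=1$) on the near piece, Taylor expansion with Schwartz decay over dyadic annuli on the far piece, doubling via Lemma \ref{l3.16}(i) in place of translation invariance, and the identical resolution of the endpoint $|\alpha|=\gamma_p=s$, where the paper's bound $(k_0-1)r^{\eta pn}\lesssim 1$ with $k_0\sim\log_2(1/r)$ is exactly your $r^{n\eta p}\log(1/r)$ compensation coming from the stronger ball-control condition. The only cosmetic difference is that you expand to order $s$ and handle an order-$(s+1)$ remainder by size alone (using $s+1>\gamma_p$), whereas the paper expands to order $s-1$ and absorbs the order-$s$ remainder into the moment estimates; both variants close.
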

\begin{proof}
Let  $\phi \in \mathcal{S}(\rn)$ with $\int_{\rn} \phi(x)dx \neq 0$. Suppose $\supp(a) \subset B(x_{B}, r)$.
Let us write
$$
\begin{aligned}
\|a\|_{h_w^{p}}=\|m_\phi a\|_{L_{\omega}^{p}}^p
\leq \int_{ 2B}\left(m_\phi a(x)\right)^p\omega(x)dx
+\int_{ (2B)^c}\left(m_\phi a(x)\right)^p\omega(x)dx=:\mathrm{S}_1+\mathrm{S}_2.
\end{aligned}
$$

Let us first estimate $\mathrm{S}_1$. When $q>1$, since  $\frac{q}{p} > 1$, by H\"older's inequality, the boundedness  of $m_\phi f$  on $L^q_\omega\left(\mathbb{R}^n\right)$ ($1< q< \infty$) and  Lemma \ref{l3.16}(i), we have
$$
\begin{aligned}
&\int_{ 2B}\left(m_\phi a(x)\right)^p\omega(x)dx\\
& \leq\left(\int_{ 2B}\left[{(m_\phi a)}^p\omega(x)^\frac{p}{q}\right]^\frac{q}{p}dx\right)^\frac{p}{q}\left(\int_{ 2B}\left[\omega(x)^\frac{q-p}{q}\right]^\frac{q}{q-p}dx\right)^\frac{q-p}{q}\\
& \le \left(\int_{ 2B}{(m_\phi a)}^q\omega(x)dx\right)^\frac{p}{q}\left(\int_{2B}\omega(x)dx\right)^\frac{q-p}{q}\\
& \le C_\phi \|a\|_{L_{\omega}^{q}(B)}^p \omega(2B)^{\frac{q-p}{q}}\\
& \leq C_{\phi, q, p, n}.
\end{aligned}
$$

Furthermore, when $q=1$, from the weak type $(L^1_\omega,\, L^1_\omega)$ of Hardy Littlewood-Paley maximal operator $M$ with $\omega\in A_1(\rn)$, the definition of $A _1 (\rn)$ and  $R:=\omega(B)^{-\frac{1}{p}}$, we deduce that
$$
\begin{aligned}
& \int_{2B}\left(m_\phi a(x)\right)^p\omega(x)dx
\leq C_\phi\int_{2 B}\left[ M a(x)\right]^p \omega(x)d x\\
&\quad\leq C_\phi p\int_{0}^\infty \lambda^{p-1}\omega(\{x\in2B: Ma(x)>\lambda\})d\lambda\\
&\quad \leq C_\phi p\int_{0}^R \lambda^{p-1}\omega(2B)+p\int_{R}^\infty \lambda^{p-1}\omega(\{x\in2B: Ma(x)>\lambda\})d\lambda\\
&\quad \leq C_\phi p\int_{0}^R \lambda^{p-1}\omega(2B)+p\int_{R}^\infty \lambda^{p-1}\frac{C}{\lambda}\int_{\rn}|a(x)|\omega(x)dx d\lambda\\
&\quad \leq C_\phi p\int_{0}^R \lambda^{p-1}\omega(2B)+Cp\int_{R}^\infty \lambda^{p-2}d\lambda \omega(B)^{1-\frac{1}{p}}\\
&\quad \leq C_\phi R^p 2^n \omega(B)+ \frac{Cp}{1-p}R^{p-1}\omega(B)^{1-\frac{1}{p}}
 \leq C_\phi 2^n+\frac{Cp}{1-p}=C_{\phi, s, p, n }.
\end{aligned}
$$

Next, let us now estimate $\mathrm{S}_2 $.
%
Let $s:=[\gamma_p]$. Using the Taylor expansion of $\phi$ up to order $s-1$ :
\begin{align}\label{e4}
\phi_t(x-y)=\sum_{|\alpha| \leq s-1} C_\alpha t^{-n-|\alpha|}\phi^{(\alpha)}\left(\frac{x-x_B}{t}\right)(x_B-y )^\alpha+ \mathcal{R}_{s}(\xi ),
\end{align}
where
\begin{align*}
\phi^{(\alpha)}(x)=\frac{\partial^{\alpha} \phi\left(x_1, \ldots, x_n\right)}{\partial x_1^{\alpha_1} \ldots \partial x_n^{\alpha_n}},\quad
\xi:=(x-x_B)+\theta(x_B-y),\quad \theta\in (0,1),
\end{align*}
and
\begin{align*}
\mathcal{R}_{s}(\xi)=\sum_{|\alpha|=s} C_\alpha t^{-n-|\alpha|} \phi^{(\alpha)}\left(\xi\right) (x_B-y)^\alpha.
\end{align*}

Since $y \in B$ and $x \notin 2 B$ and $\xi$ lies on the line between $x-x_B$ and $x_B-y$,  we have $\left|\xi\right| \geq\frac{|x-x_B|}{ 2}$.
Thus, by \eqref{e4}, we obtain
\begin{align}\label{e6}
&\left|\phi_t* a(x)\right|
=\left|\int_{\rn}\left(\phi_t(x-y)- \sum_{|\alpha| \leq s-1} t^{-n-|\alpha|}\phi^{(\alpha)}\left(\frac{x-x_B}{t} \right)(x_B-y)^\alpha\right) a(y)dy\right|\nonumber\\
&\quad +\left|\int_{\rn}\sum_{|\alpha| \leq s-1} t^{-n-|\alpha|}\phi^{(\alpha)}\left(\frac{x-x_B}{t} \right)  a(y)(x_B-y)^\alpha d y\right| \\\nonumber
&\lesssim  \sum_{|\alpha| \leq s-1} t^{-n-|\alpha|}\left|\phi^{(\alpha)}\left(\frac{x-x_B}{t} \right)\right| \left|\int_{B} a(y)(x_B-y)^\alpha d y\right| \\\nonumber
& + \sum_{|\alpha| =s} t^{-n-|\alpha|}\left|\int_{B} \left|\mathcal{R}_{s}(\xi)\right| |a(y)| d y\right|.\nonumber
\end{align}

Let us estimate $\mathrm{S}_2$ under two cases: $0<r<1$ and $r\geq1$.

$\textbf{Case I}$: \textbf{estimate} $\mathrm{S}_2$ \textbf{under} $0<r<1$. Let $k_0\in Z_+$ satisfying $2^{k_0}r\leq2< 2^{k_0+1}r$. For $\phi \in \mathcal{S}(\rn)$, we will use the bound $\left| \phi^{(\alpha)}(x)\right| \leq C_{\alpha}|x|^{-N_\alpha}$, where $N_\alpha > 0$ depending on $\alpha$, will be
chosen conveniently.
We take $N_\alpha= n + |\alpha|$ for the first part and  $N_\alpha = n + |\alpha|+l$ for the second one, where $l\in Z_+$ to be fixed later. Let $A_k :=\{2^kr \leq\left|x-x_B\right| < 2^{k+1}r\}$, $k=1,2,3, \cdots$. By (\ref{e6}), we have
\begin{align}\label{e7}
&\int_{ (2 B)^c}\left(\sup _{0<t<1}\left|\phi_t * a(x)\right|\right)^p \omega(x)d x \\\nonumber
&\lesssim \int_{(2 B)^c} \left(\sup _{0<t<1}\sum_{|\alpha| \leq s-1} t^{-n-|\alpha|}\left|\phi^{(\alpha)}\left(\frac{x-x_B }{t}\right)\right| \left|\int_{B} a(y)(x_B-y)^\alpha d y\right|\right)^p\omega(x)dx \\\nonumber
&\quad +\int_{(2 B)^c}\left( \sup _{0<t<1}\sum_{|\alpha| =s}  t^{-n-|\alpha|}\left|\int_{B} \left|\mathcal{R}_{s}(\xi)\right| |a(y)| d y\right|\right)^p \omega(x)d x\\\nonumber
&\lesssim \sum_{k=1}^{k_0-1}\int_{A_k} \sup _{0<t<1}\sum_{|\alpha| \leq s} t^{-np-\left|\alpha\right| p}\left|\frac{x-x_B}{t}\right|^{-np-\left|\alpha\right|p }\left|\int _{\mathbb{R}^n}a(y)(x_B-y)^\alpha d y\right|^p  \omega(x)d x\\\nonumber
&\quad +\sum_{k=k_0}^{\infty}\int_{A_k}\sup _{0<t<1}\sum_{|\alpha| \leq s}  t^{-np-\left|\alpha\right| p}\left|\frac{x-x_B}{t}\right|^{-np-\left|\alpha\right|p -l p}\left|\int _{\mathbb{R}^n}a(y)(x_B-y)^\alpha d y\right|^p  \omega(x)d x\\\nonumber
& =:\mathrm{I_1+I_2}.\nonumber
\end{align}

First, let us estimate $\mathrm{I_1}$. When $|\alpha|\leq s \neq\gamma_p$ and  $r<1$, by Definiton \ref{d2.33}$(A\acute{_3})$, Lemma \ref{l3.16}(i), $\omega\in A_1(\rn )$ and $n-np-|\alpha|p>0$, we obtain
\begin{align*}
\mathrm{I_1}
&\lesssim \sum_{k=1}^{k_0-1} \sum_{|\alpha| \leq s} \int_{A_k  }\left|x-x_B\right|^{-np -\left|\alpha\right| p}\left| \int_{\mathbb{R}^n} a(y)(x_B-y)^\alpha d y\right|^p \omega(x)d x \\\nonumber
& \lesssim \sum_{k=1}^{k_0-1} \sum_{|\alpha| \leq s} \int_{A_k  }\left(2^{k}r\right)^{-np -\left|\alpha\right| p} r^n \omega(B(x_B,r))^{-1 }  \omega(x)d x \\\nonumber
& \lesssim \sum_{k=1}^{k_0-1}\sum_{|\alpha| \leq s}2^{-k(np +\left|\alpha\right| p)} 2^{kn} r^{n-np-|\alpha| p}\\\nonumber
& = C \sum_{|\alpha| \leq s}\frac{2^{(n-np-|\alpha| p)}(1-2^{(n-np-|\alpha| p)(k_0-1)})}{1-2^{(n-np-|\alpha| p)}}r^{n-np-|\alpha| p}\\\nonumber
& =C\sum_{|\alpha| \leq s} \frac{2^{(n-np-|\alpha| p)k_0}-2^{(n-np-|\alpha| p)}}{2^{(n-np-|\alpha| p)}-1}r^{n-np-|\alpha| p}\\\nonumber
& \lesssim \sum_{|\alpha| \leq s}\frac{r^{-(n-np-|\alpha| p)}-2^{(n-np-|\alpha| p)}}{2^{(n-np-|\alpha| p)}-1}r^{n-np-|\alpha| p}\\\nonumber
& =C\sum_{|\alpha| \leq s}\frac{1-2r^{(n-np-|\alpha| p)}}{2^{(n-np-|\alpha| p)}-1}\\\nonumber
&= C .
\end{align*}

 Next, let us estimate $\mathrm{I_2}$. When $|\alpha|\leq s\neq\gamma_p$ and $r<1$,  by Definiton \ref{d2.33}$(A\acute{_3})$, Lemma \ref{l3.16}(i), $\omega\in A_1(\rn)$ and $2^{k_0}\sim \frac{1}{r}$, we obtain
\begin{align*}
\mathrm{I_2}
& \lesssim\sum_{|\alpha| \leq s}\sum_{k=k_0}^{\infty} \int_{A_k }\sup _{0<t<1}t^{-np-\left|\alpha\right| p}\left|\frac{x-x_B}{t}\right|^{-np-\left|\alpha\right|p -l p}\left|\int _{\mathbb{R}^n}a(y)(x_B-y)^\alpha d y\right|^p \omega(x)d x\\\nonumber
&\lesssim \sum_{|\alpha| \leq s} \sum_{k=k_0}^{\infty} \int_{A_k }\left|x-x_B\right|^{-np -\left|\alpha\right| p-{l p}}\left| \int_{\mathbb{R}^n} a(y) (x_B-y)^\alpha d y\right|^p \omega(x)d x \\\nonumber
&\lesssim \sum_{|\alpha| \leq s} \sum_{k=k_0}^{\infty} \int_{A_k }\left(2^kr\right)^{-np -\left|\alpha\right| p-{l p}}r^n w(B(x_B,r))^{-1 } \omega(x)d x \\\nonumber
&\lesssim \sum_{|\alpha| \leq s} \sum_{k=k_0}^{\infty} \left(2^kr\right)^{-np -\left|\alpha\right| p-{l p}}r^n 2^{kn} \\\nonumber
&\lesssim \sum_{|\alpha| \leq s} r^{n-np-|\alpha|p-lp}\frac{2^{k_0(n-np-|\alpha|p-lp)}}{1-2^{n-np-|\alpha|p-lp}}\\\nonumber
&\lesssim \sum_{|\alpha| \leq s}r^{n-np-|\alpha|p-lp}\frac{r^{-(n-np-|\alpha|p-lp)}}{1-2^{n-np-|\alpha|p-lp}} \\\nonumber
&\lesssim \sum_{|\alpha| \leq s}\frac{1}{1-2^{n-np-|\alpha|p-lp}}\lesssim 1\nonumber,
\end{align*}
 where we pick $l\in Z_+$ such that $n-np-s-lp<0$.

When $|\alpha|= s =\gamma_p$ and $r<1$,  by Definition \ref{d2.33}$(A\acute{_3})$, Lemma \ref{l3.16}(i), $np+\left|\alpha\right|p=n$, $\omega\in A_1(\rn)$ and $2^{k_0}\sim \frac{1}{r}$. we have
\begin{align*}
\mathrm{I_1}
& \lesssim\sum_{k=1}^{k_0-1}\sum_{|\alpha| = s}\int_{A_k }\sup _{0<t<1}t^{-np-\left|\alpha\right| p}\left|\frac{x-x_B}{t}\right|^{-np-\left|\alpha\right|p }\left|\int _{\mathbb{R}^n}a(y)(x_B-y)^\alpha d y\right|^p \omega(x)d x\\\nonumber
&\lesssim \sum_{k=1}^{k_0-1} \sum_{|\alpha|= s} \int_{A_k }\left|x-x_B\right|^{-np -\left|\alpha\right| p}\left| \int_{\mathbb{R}^n} a(y)(x_B-y)^\alpha d y\right|^p \omega(x)d x \\\nonumber
&\lesssim \sum_{k=1}^{k_0-1} \sum_{|\alpha| = s} \int_{A_k}\left|x-x_B\right|^{-np -\left|\alpha\right| p} \left(\frac{\left|B\right|}{\omega(B)}\right)^{\beta p } \omega(B)^{\eta p} \omega(x)d x \\\nonumber
&\lesssim\sum_{k=1}^{k_0-1} \sum_{|\alpha| = s} \int_{A_k }\left(2^kr\right)^{-np -\left|\alpha\right| p} |B|^{\eta p+1} \omega(B)^{-1}\omega(x)d x \\\nonumber
&\lesssim \sum_{k=1}^{k_0-1}  (2^k)^{-n} 2^{kn} r^{\eta pn}\lesssim \sum_{k=1}^{k_0-1}r^{\eta pn}\lesssim (k_0-1)r^{\eta pn}\lesssim 1.\nonumber
\end{align*}

Next, let us estimate $\mathrm{I_2}$. When $|\alpha|=s=\gamma_p$ and $r<1$, by Definition \ref{d2.33}$(A\acute{_3})$, Lemma \ref{l3.16}(i), $np+\left|\alpha\right|p=n$, $\omega\in A_1(\rn)$ and $2^{k_0}\sim \frac{1}{r}$,  we obtain
\begin{align*}
 \mathrm{I_2}
&\lesssim \sum_{k=k_0}^{\infty}  \sum_{|\alpha| = s}\int_{A_k }\sup _{0<t<1}t^{-np-\left|\alpha\right| p-lp}\left|\frac{x-x_B}{t}\right|^{-np-\left|\alpha\right|p -lp}\left|\int _{\mathbb{R}^n}a(y)(x_B-y)^\alpha d y\right|^p \omega(x)d x\\\nonumber
&\lesssim \sum_{k=k_0}^{\infty} \sum_{|\alpha|= s} \int_{A_k}\left|x-x_B\right|^{-np -\left|\alpha\right| p-lp}\left| \int_{\mathbb{R}^n} a(y)(x_B-y)^\alpha d y\right|^p \omega(x)d x \\\nonumber
&\lesssim \sum_{k=k_0}^{\infty} \sum_{|\alpha| = s} \int_{A_k}\left|x-x_B\right|^{-np -\left|\alpha\right| p-lp} \left(\frac{\left|B\right|}{\omega(B)}\right)^{\beta p } \omega(B)^{\eta p} \omega(x)d x \\\nonumber
&\lesssim \sum_{k=k_0}^{\infty}  \int_{A_k }\left(2^kr\right)^{-n-lp} r^{\eta pn+n}\omega(B)^{-1} \omega(x)d x \\\nonumber
&\lesssim \sum_{k=k_0}^{\infty}  (2^k)^{-n-lp} 2^{kn} r^{n\eta p-lp}\\\nonumber
&\lesssim\sum_{k=k_0}^{\infty}2^{-klp}r^{\eta pn-lp}
\lesssim \frac{2^{k_0 lp}}{1-2^{lp}}r^{\eta pn-lp}\lesssim \frac{r^{\eta pn}}{1-2^{lp}}\lesssim1.\nonumber
\end{align*}

\textbf{Case II}: $\textbf{estimate }$ $\textbf{$\mathrm{S}_2$}$ $\textbf{under}$ $\textbf{$r\geq1$}$.  When $|\alpha|\leq s \neq\gamma_p$, where $l\in Z_+$ to be fixed later.  By Definiton \ref{d2.33}$(A\acute{_3})$ and Lemma \ref{l3.16}(i), we have
\begin{align*}
\mathrm{I_2}
& \lesssim\sum_{|\alpha| \leq s}\sum_{k=1}^{\infty} \int_{A_k }\sup _{0<t<1}t^{-np-\left|\alpha\right| p}\left|\frac{x-x_B}{t}\right|^{-np-\left|\alpha\right|p -l p}\left|\int _{\mathbb{R}^n}a(y)(x_B-y)^\alpha d y\right|^p \omega(x)d x\\\nonumber
&\lesssim \sum_{|\alpha| \leq s} \sum_{k=1}^{\infty} \int_{A_k }\left|x-x_B\right|^{-np -\left|\alpha\right| p-{l p}}\left| \int_{\mathbb{R}^n} a(y) (x_B-y)^\alpha d y\right|^p \omega(x)d x \\\nonumber
&\lesssim\sum_{|\alpha| \leq s} \sum_{k=1}^{\infty} \int_{A_k}\left(2^kr\right)^{-np -\left|\alpha\right| p-{l p}}r^n w(B(x_B,r))^{-1 } \omega(x)d x \\\nonumber
&\lesssim\sum_{|\alpha| \leq s} \sum_{k=1}^{\infty} \left(2^kr\right)^{-np -\left|\alpha\right| p-{l p}}r^n 2^{kn}\lesssim1,\nonumber
\end{align*}
where we pick $l\in Z_+$ such that $n-np-s-lp<0$.

Analogously to the case of  $|\alpha|\leq s \neq\gamma_p$, when $|\alpha|= s =\gamma_p$,  repeating the estimate of $\eqref{e7}$ with $np+\left|\alpha\right|p=n$ and $r\geq1$, we have
\begin{align*}
\mathrm{I_2}
& \lesssim \sum_{|\alpha| = s}\sum_{k=1}^{\infty} \int_{A_k }\sup _{0<t<1}t^{-np-\left|\alpha\right| p}\left|\frac{x-x_B}{t}\right|^{-np-\left|\alpha\right|p -l p}\left|\int _{\mathbb{R}^n}a(y)(x_B-y)^\alpha d y\right|^p \omega(x)d x\\\nonumber
&\lesssim \sum_{|\alpha| = s} \sum_{k=1}^{\infty} \int_{A_k}\left|x-x_B\right|^{-np -\left|\alpha\right| p-{l p}}\left| \int_{\mathbb{R}^n} a(y) (x_B-y)^\alpha d y\right|^p \omega(x)d x \\\nonumber
&\lesssim \sum_{|\alpha| =s} \sum_{k=1}^{\infty} \int_{A_k}\left(2^kr\right)^{-np -\left|\alpha\right| p-{l p}}r^{n+\eta pn} w(B(x_B,r))^{-1 } \omega(x)d x \\\nonumber
&\lesssim  \sum_{k=1}^{\infty} \left(2^k\right)^{-n-{l p}}r^{\eta pn} r^{-{l p}}2^{kn} \lesssim1,\nonumber
\end{align*}
where we pick $l\in Z_+$ such that $\eta n-l<0$.

Combining  the estimates of  {Cases I} and {II}, we obtain $\mathrm{S}_2\leq C$. This finishes the proof of Theorem \ref{m2.36}.
\end{proof}
\begin{theorem}\label{l3.5}
Let $\omega\in A_{1}(\rn)$, $0 < p \le 1\leq q<\infty$ with $p\neq q$ and  $s:=[n(\frac{1}{p}-1)]$.  Then
\begin{align*}
h_{\omega,approx}^{p, q, s} (\mathbb{R}^n )=h_w^p (\mathbb{R}^n)
\end{align*}
with  equivalent  norms.
\end{theorem}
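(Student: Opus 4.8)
The plan is to prove the two continuous inclusions separately, with matching norm bounds, and then to invoke the classical weighted atomic decomposition of Bui \cite{hb81} to close the circle. The substantive analytic work has already been done in Theorem \ref{m2.36}; the present statement is largely a matter of assembling it with the classical theory.

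First I would establish $h_{\omega,approx}^{p,q,s}(\rn)\subseteq h_\omega^p(\rn)$ together with $\|f\|_{h_\omega^p}\lesssim\|f\|_{h_{\omega,approx}^{p,q,s}}$. Let $f\in h_{\omega,approx}^{p,q,s}(\rn)$ and fix any decomposition $f=\sum_{i=0}^\infty\lambda_i a_i$ in $\mathcal{S}'(\rn)$, where each $a_i$ is a $(p,q,s,\omega)$-approximate atom and $\sum_i|\lambda_i|^p<\infty$. For the fixed Schwartz function $\phi$ of Definition \ref{d2.5} and each $x\in\rn$, $0<t<1$, the function $y\mapsto\phi_t(x-y)$ lies in $\mathcal{S}(\rn)$, so convergence in $\mathcal{S}'(\rn)$ yields the pointwise identity $\phi_t*f(x)=\sum_i\lambda_i\,\phi_t*a_i(x)$. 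Taking absolute values, then the supremum over $t\in(0,1)$, and using $0<p\leq1$, I obtain
\begin{align*}
(m_\phi f(x))^p\leq\lf(\sum_i|\lambda_i|\,m_\phi a_i(x)\r)^p\leq\sum_i|\lambda_i|^p(m_\phi a_i(x))^p.
\end{align*}
Integrating against $\omega$ and applying Theorem \ref{m2.36} to each approximate atom $a_i$ gives
\begin{align*}
\|f\|_{h_\omega^p}^p=\int_\rn(m_\phi f(x))^p\omega(x)\,dx\leq\sum_i|\lambda_i|^p\int_\rn(m_\phi a_i(x))^p\omega(x)\,dx\leq C\sum_i|\lambda_i|^p,
\end{align*}
and taking the infimum over all such decompositions yields $\|f\|_{h_\omega^p}\leq C^{1/p}\|f\|_{h_{\omega,approx}^{p,q,s}}$.

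For the reverse inclusion I would rely on the classical weighted atomic decomposition. Since $\omega\in A_1(\rn)$ satisfies $\omega\in A_q(\rn)$ for every $q\geq1$, its critical index is $q_\omega=1$, so the minimal admissible moment order in Definition \ref{d2.33}(i) is $s_0=[n(\frac{q_\omega}{p}-1)]=[n(\frac{1}{p}-1)]=s$; the classical theory (Bui \cite{hb81}) then provides $h_\omega^p(\rn)=h_\omega^{p,q,s}(\rn)$ with equivalent norms. Given $f\in h_\omega^p(\rn)$, choose a classical decomposition $f=\sum_i\lambda_i a_i$ in $\mathcal{S}'(\rn)$ with $(p,q,s,\omega)$-atoms $a_i$ and $(\sum_i|\lambda_i|^p)^{1/p}\lesssim\|f\|_{h_\omega^p}$. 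By Remark \ref{re1.1}(ii) each $a_i$ is also a $(p,q,s,\omega)$-approximate atom, so this same decomposition is an admissible approximate atomic decomposition of $f$. Hence $f\in h_{\omega,approx}^{p,q,s}(\rn)$, and since the infimum defining $\|\cdot\|_{h_{\omega,approx}^{p,q,s}}$ runs over a larger family of decompositions, $\|f\|_{h_{\omega,approx}^{p,q,s}}\leq\|f\|_{h_\omega^{p,q,s}}\lesssim\|f\|_{h_\omega^p}$.

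Combining the two estimates gives $h_{\omega,approx}^{p,q,s}(\rn)=h_\omega^p(\rn)$ with equivalent norms. I do not expect a serious obstacle here: the reverse inclusion is formal once the classical decomposition and Remark \ref{re1.1}(ii) are in hand, and the forward inclusion reduces, through the $p$-subadditivity of $(m_\phi\,\cdot)^p$, to the single-atom bound of Theorem \ref{m2.36}. The only points that genuinely demand care are the justification of the pointwise identity $\phi_t*f=\sum_i\lambda_i\,\phi_t*a_i$ from $\mathcal{S}'$-convergence (valid because $\phi_t(x-\cdot)\in\mathcal{S}(\rn)$ for fixed $x,t$) and the observation that $q_\omega=1$ for $A_1$ weights, which is precisely what makes $s_0=s$ admissible and thereby aligns the moment order $s$ of the approximate atoms with the classical atomic theory.
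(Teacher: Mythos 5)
Your proposal is correct and follows essentially the same route as the paper: one inclusion via the classical weighted atomic decomposition (the paper invokes it through Str\"omberg--Torchinsky's $(p,\infty,s,\omega)$-atom lemma, you through Bui's theory with $q_\omega=1$, which amounts to the same fact) combined with Remark \ref{re1.1} that atoms are approximate atoms, and the other via $p$-subadditivity of $(m_\phi\,\cdot)^p$ applied termwise to an approximate atomic decomposition together with the uniform single-atom bound of Theorem \ref{m2.36}. Your explicit justification of the pointwise identity $\phi_t*f=\sum_i\lambda_i\,\phi_t*a_i$ from $\mathcal{S}'$-convergence is a small point of added care that the paper leaves implicit, but it does not change the argument.
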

To prove Theorem \ref{l3.5}, we need a lemma as follows.
\begin{lemma}\label{l2.11}
\cite{st89} Let $w \in A_{\infty}\left(\rn\right)$ and $0<p \leq 1$.
For each $f \in h^p_\omega(\rn)$, there exist $\left\{a_i\right\}$ of $(p, \infty$, $s, w)$-atoms and $\left\{\lambda_i\right\}$ of real numbers with $\sum_{i=1}^{\infty}\left|\lambda_i\right|^p \leq C\|f\|_{ h^p_\omega}^p$ such that $f=\sum_{i=0}^{\infty}\lambda_i a_i$ in $ h^p_\omega(\rn)$.
\end{lemma}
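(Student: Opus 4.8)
The plan is to obtain the decomposition by a Calder\'on--Zygmund stopping-time decomposition of the grand maximal function, building the atoms by hand while tracking both the local scale $t<1$ and the weight $\omega$; this is the weighted local counterpart of the classical Fefferman--Stein argument. First I would replace the $\phi$-maximal function by the \emph{local grand maximal function}
$$
\mathcal{M}_N f(x):=\sup\lf\{|f*\psi_t(x)|:\ 0<t<1,\ \psi\in\mathcal{F}_N\r\},
$$
where $\mathcal{F}_N$ is the set of $\psi\in\mathcal{S}(\rn)$ with $\sum_{|\alpha|\le N}\sup_{z\in\rn}(1+|z|)^{N}|\psi^{(\alpha)}(z)|\le1$. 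Because the $h^p_\omega$ norm does not depend on the admissible profile (the remark after Definition \ref{d2.5}), for $N$ large enough one has $\|\mathcal{M}_N f\|_{L^p_\omega}\approx\|f\|_{h^p_\omega}$, so I may set $G:=\mathcal{M}_N f$ and work with $G\in L^p_\omega(\rn)$.

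Next I would run the decomposition over dyadic heights. For each $k\in\Z$ let $\Omega_k:=\{x\in\rn:\ G(x)>2^k\}$; these sets are open, nested decreasingly, and have $\omega(\Omega_k)<\fz$. I would Whitney-decompose each $\Omega_k$ into balls $\{B_{k,j}\}_j$ of radius $r_{k,j}\approx\dist(B_{k,j},\Omega_k^{\,c})$ with bounded overlap, and fix a smooth partition of unity $\{\zeta_{k,j}\}_j$ subordinate to a fixed dilate of these balls. On each \emph{small} ball, $r_{k,j}<1$, I would subtract from $f\zeta_{k,j}$ its projection $P_{k,j}$ onto polynomials of degree $\le s$ in $L^2(\zeta_{k,j}\,dx)$, so the resulting piece has vanishing moments up to order $s$; on each \emph{large} ball, $r_{k,j}\ge1$, I would subtract nothing, since a $(p,\infty,s,\omega)$-atom on a ball of radius $\ge1$ carries no moment condition. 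This is exactly where the restriction $t<1$ in $\mathcal{M}_N$ is used: at scale $t\sim1$ the maximal function already controls $f$ on large balls without any cancellation. Setting $b_k:=\sum_j\big(f\zeta_{k,j}-P_{k,j}\big)$ and $g_k:=f-b_k$, the usual estimates give $\mathcal{M}_N g_k\lesssim 2^k$ pointwise, with $g_k\to f$ in $\mathcal{S}'$ as $k\to+\fz$ and $g_k\to0$ as $k\to-\fz$.

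I would then telescope $f=\sum_{k\in\Z}(g_{k+1}-g_k)$ and regroup over the Whitney balls to write
$$
f=\sum_{k,j}\lambda_{k,j}\,a_{k,j},\qquad \lambda_{k,j}:=C\,2^{k}\,\omega(B_{k,j})^{1/p},
$$
where each $a_{k,j}$ is supported in a fixed dilate of $B_{k,j}$ and has the required moments precisely when $r_{k,j}<1$. The normalization $\|a_{k,j}\|_{L^\infty}\le\omega(B_{k,j})^{-1/p}$ would come from $\mathcal{M}_N g_k\lesssim2^{k}$ together with the $A_\infty$ doubling of $\omega$ (Lemma \ref{l3.16}(i)), used to pass between $\omega$ on $B_{k,j}$, on its dilate, and across the bounded overlap. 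The coefficient bound is then
$$
\sum_{k,j}|\lambda_{k,j}|^p\approx\sum_{k\in\Z}2^{kp}\sum_j\omega(B_{k,j})\lesssim\sum_{k\in\Z}2^{kp}\omega(\Omega_k)\lesssim\int_{\rn}G(x)^p\,\omega(x)\,dx\approx\|f\|_{h^p_\omega}^p,
$$
using the bounded overlap of $\{B_{k,j}\}_j$ and the layer-cake formula for $\int_{\rn}G^p\omega$.

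The step I expect to be the main obstacle is the \emph{local/global bookkeeping carried out simultaneously with the weight}. One must verify that balls of radius $\ge1$ genuinely yield atoms with no moment condition — so the polynomial subtraction has to be switched off there — while still keeping $g_k$ bounded by $2^k$; and every Lebesgue-measure comparison of the unweighted argument has to be replaced by the $A_\infty$ doubling and reverse-H\"older estimates behind Lemma \ref{l3.16}, which is what makes the moment order $s$ suffice against the weighted polynomial growth and makes the $L^\infty$ normalization come out with the exact power $\omega(B_{k,j})^{-1/p}$. Once these are in place, convergence of $\sum_{k,j}\lambda_{k,j}a_{k,j}$ in the $h^p_\omega$ quasi-norm (not merely in $\mathcal{S}'$) would follow from the coefficient estimate above combined with a uniform atomic bound of the type established in Theorem \ref{m2.36}.
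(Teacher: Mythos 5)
The paper does not actually prove this lemma---it is quoted verbatim from Str\"omberg--Torchinsky \cite{st89} (see also Bui \cite{hb81})---and your sketch reconstructs precisely the argument of that cited source: local grand maximal function, Calder\'on--Zygmund/Whitney decomposition of the level sets $\{\mathcal{M}_N f>2^k\}$, polynomial subtraction of degree $\le s$ switched off on Whitney balls of radius $\ge 1$, telescoping $f=\sum_k(g_{k+1}-g_k)$, and the layer-cake coefficient bound, so your proposal is correct in outline and takes essentially the same route as the paper's (deferred) proof. The one point to watch is that for a general $\omega\in A_\infty$, as in the lemma's hypothesis, the moment order needed for the tail estimate is $[n(q_\omega/p-1)]$ rather than $[n(1/p-1)]$; your doubling-based normalization works as stated only when these coincide, which is the case in the paper's application since the lemma is invoked there for $\omega\in A_1$.
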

%
%

\begin{proof}[Proof of Theorem \ref{l3.5}]
For any $f\in h^p_\omega(\rn)$, by Lemma \ref{l2.11}, there exist $\left\{a_i\right\}$ of $(p, \infty$, $s, w)$-atoms and $\left\{\lambda_i\right\}$ of real numbers with $\sum_{i=1}^{\infty}\left|\lambda_i\right|^p \leq C\|f\|_{ h^p_\omega}^p$ such that $f=\sum_{i=0}^{\infty} \lambda_i a_i$ in $ h^p_\omega(\rn)$. Since every local weighted $(p, \infty$, $s, \omega)$-atom is also a $(p, q$, $s, w)$-atom and every  $(p, q$, $s, w)$-atom is also a $(p, q, s,\omega)$-approximate atom (Remark \ref{re1.1}(iv)), therefore
$$\|f\|_{h_{\omega,approx}^{p, q, s}}\leq\left(\sum_{i=0}^{\infty}\left|\lambda_i\right|^p\right)^{1 / p}\leq C\|f\|_{h_w^{p}}<\infty$$
and hence $f\in h_{w,approx}^{p, q, s}$, which implies  $h^p_\omega(\rn)\subset  h_{w,approx}^{p, q, s}(\rn)$.

Conversely, let us now prove $h_{w,approx}^{p, q, s}(\rn)\subset h^p_\omega(\rn)$. For any $f\in h_{w,approx}^{p, q, s}(\rn)$, there exist $\left\{a_i\right\}$ of $(p, q, s,\omega)$-approximate atoms and $\left\{\lambda_i\right\}$ of real numbers such that
\begin{align}\label{eq2.111}
f=\sum_{i=0}^{\infty} \lambda_i a_i,\quad \mathrm{and}\quad
\left(\sum_{i=0}^{\infty}\left|\lambda_i\right|^p\right)^{1 / p}\leq 2\|f\|_{h_{w,approx}^{p, q, s}}.
 \end{align}
Then by \eqref{eq2.111} and Theorem \ref{m2.36}, we have
$$
\begin{aligned}
\|f\|_{h_w^{p}}(\rn)=\|m_\phi f\|_{L_{\omega}^{p}}^p & =\left\|\sup _{0<t<1}\left|\phi_t *\left(\sum_{i=1}^{\infty} \lambda_ia_i\right)\right|\right\|_{L_{\omega}^{p}}^p \\
& =\left\|\sup _{0<t<1}\left|\sum_{i=1}^{\infty} \lambda_i\left(\phi_t*a_i\right)\right|\right\|_{L_{\omega}^{p}}^p \\
& \leq\left\|\sum_{i=1}^{\infty}\left|\lambda_i\right|\left(\sup _{0<t<1}\left|\phi_t *a_i\right|\right)\right\|_{L_{\omega}^{p}}^p \\
& \leq \sum_{i=1}^{\infty}\left|\lambda_i\right|^p\left\|\sup _{0<t<1}\left|\phi_t *a_i\right|\right\|_{L_{\omega}^{p}}^p\\
& \lesssim \left(\sum_{i=0}^{\infty}\left|\lambda_i\right|^p\right)
\lesssim \|f\|_{h_{w,approx}^{p, q, s}}^p,
\end{aligned}
$$
and hence $f\in h_{\omega}^{p}(\rn)$, which implies $ h_{w,approx}^{p, q, s}(\rn)\subset  h^p_\omega(\rn)$.

Combining the above discussion, we finish the proof of  Theorem \ref{l3.5}.
\end{proof}
\section{The molecules of weighted local Hardy spaces }
\begin{definition}\label{d2.11}
 Let $w\in A_{1}(\rn)$, $0<p \leq 1 \leq q \leq \infty$ with $p<q$, $\lambda >n(\frac{q}{p}-1)$ and $s:=[n(\frac{1}{p}-1)]$.   $M$ is a {\it $(p, q, s, \lambda, \omega)$-approximate molecule} if there exists a ball $B(x_{B}, r)$ and $\eta>0$ such that a function $M$ on $\rn$ satisfies $(\mathrm{M}_1)$, $(\mathrm{M}_2)$ and  the following $(\mathrm{M}_3)$ when $r<1$:
\begin{enumerate}
\item [$(\mathrm{M}_1)$] $\|M\|_{L_\omega^q(B)}^q\leq w\left(B\left(x_B, r\right)\right)^{\frac{1}{q}-\frac{1} { p}}$, \\
\item [$(\mathrm{M}_2)$] $ \|M\left|\cdot-x_B\right|^\lambda \|_{L_\omega^q(B^c)}^q\leq r^{\frac{\lambda}{q}} w\left(B\left(x_B, r\right)\right)^{\frac{1}{q}-\frac{1} { p}},$\\
\item [$(\mathrm{M}_3)$]  $\begin{cases}\left|\int_{\rn} M(x) (x-x_B)^{\alpha }d x\right| \leq C\left(\frac{\left|B\right|}{\omega(B)}\right)^{\frac{1}{p}}&\text{if}\quad|\alpha|<\gamma_p\neq s\\\left|\int_{\rn} M(x) (x-x_B)^{\alpha }d x\right| \leq C\left(\frac{\left|B\right|}{\omega(B)}\right)^\beta \omega(B)^\eta&\text{if}\quad |\alpha|=\gamma_p=s\end{cases},$
\end{enumerate}
where $\beta:=\eta+\frac{1}{p}$, or $M$ satisfies $(\mathrm{M}_1)$ and $(\mathrm{M}_1)$ when  $ r\geq1 $.
\end{definition}

Next, we show that a $(p,q,s, \lambda, \omega)$-approximate molecule can be decomposed  into the sum of weighted atoms and a weighted approximate atom with uniformly bounded norm in $h{_{\omega}^p}(\rn)$; see the following Theorems \ref{t3.11} and \ref{d3.66}, the proofs of which are shown later.
\begin{theorem}\label{t3.11}
Let $\omega\in A_{1}(\rn)$, $0<p \leq 1 < q < \infty$, $\lambda >n(\frac{q}{p}-1)$   and   $s:=[n(\frac{1}{p}-1)]$.
If $M$ is a  $(p, q, s, \lambda, \omega)$-approximate molecule, then
$$
M(x)=\sum_{k=0}^{\infty} t_k a_k(x)+\sum_{k=0}^{\infty} s_k b_k(x)+a(x) \quad \mathrm{for} \quad a.e.\quad x\in \rn
$$
and
$$
\sum_{k=0}^{\infty}\left|t_k\right|^p<C  \quad \mathrm{and}\ \ \sum_{k=0}^{\infty}\left|s_k\right|^p<C,
$$
where $\{a_k\}$ and $\{b_k\}$ are $(p, q , s,\omega)$-atoms and $(p, \infty, s, \omega )$-atoms, respectively, and $a$ is a $ (p, q,s,\omega)$-approximate atom.
\end{theorem}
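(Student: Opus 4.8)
The plan is to adapt the classical molecule-to-atom decomposition of Taibleson--Weiss and Coifman to the weighted, \emph{local}, \emph{approximate} setting, the new features being that the weight is not translation invariant and that the molecule satisfies only the ball-control moment bounds $(\mathrm{M}_3)$ rather than exact cancellation. Write $B:=B(x_B,r)$ and $B_k:=B(x_B,2^kr)$, and set the annuli $E_0:=B$, $E_k:=B_k\setminus B_{k-1}$ ($k\ge1$), so that $M=\sum_{k=0}^\infty M\chi_{E_k}$ a.e. The first step is the annular size estimate: $(\mathrm{M}_1)$ controls $M\chi_{E_0}$, while on $E_k$ one has $|x-x_B|\gtrsim 2^kr$, so $(\mathrm{M}_2)$ yields a geometrically decaying bound $\|M\chi_{E_k}\|_{L^q_\omega}\lesssim 2^{-k\lambda}\,\omega(B)^{\frac1q-\frac1p}$. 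Normalizing each piece onto $B_k$ with atomic size $\omega(B_k)^{\frac1q-\frac1p}$, and comparing $\omega(B_k)$ with $\omega(B)$ through the doubling bound \eqref{e2.16} of Lemma~\ref{l3.16}(i), I obtain coefficients $t_k$ with $|t_k|^p\lesssim 2^{-k\lambda p+kn(q-p)}$, so that the hypothesis $\lambda>n(\frac qp-1)$ is exactly what forces $\sum_k|t_k|^p<C$ uniformly in $M$; this is the only role of $\lambda$.

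Second, moment correction, needed only where the radius is small. Let $k_0$ be the integer with $2^{k_0}r\sim1$; for $k>k_0$ the balls $B_k$ have radius $\ge1$, so the normalized pieces $M\chi_{E_k}$ are already $(p,q,s,\omega)$-atoms, no cancellation being required. For $0\le k\le k_0$, fix normalized polynomial bumps $\Psi_k^\alpha$ supported in $B_k$ with $\int_{\rn}\Psi_k^\alpha(x)(x-x_B)^\beta\,dx=\delta_{\alpha\beta}$ and $\|\Psi_k^\alpha\|_{L^\infty}\lesssim(2^kr)^{-n-|\alpha|}$, set $m_k^\alpha:=\int_{E_k}M(x)(x-x_B)^\alpha\,dx$ and $N_k:=\sum_{|\alpha|\le s}m_k^\alpha\Psi_k^\alpha$. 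Then $a_k:=M\chi_{E_k}-N_k$ is supported in $B_k$, has vanishing moments up to order $s$, and has $L^q_\omega$-size controlled as in Step~1; normalized, these are the $(p,q,s,\omega)$-atoms, with $\sum_k|t_k|^p<C$.

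The decisive third step is to reorganize the bounded tower $\sum_{0\le k\le k_0}N_k$. Writing $T_k^\alpha:=\sum_{j=0}^k m_j^\alpha$ and summing by parts gives the identity $\sum_{k=0}^{k_0}N_k=\sum_{k=0}^{k_0-1}b_k+a$, where $b_k:=\sum_{|\alpha|\le s}T_k^\alpha(\Psi_k^\alpha-\Psi_{k+1}^\alpha)$ and $a:=\sum_{|\alpha|\le s}T_{k_0}^\alpha\Psi_{k_0}^\alpha$. Each $b_k$ is bounded, supported in $B_{k+1}$, and has vanishing moments up to order $s$ (the moments of $\Psi_k^\alpha$ and $\Psi_{k+1}^\alpha$ cancel), so after normalization it is a $(p,\infty,s,\omega)$-atom; the geometric decay of $T_k^\alpha$ inherited from Step~1, together with \eqref{e2.16}, gives $\sum_k|s_k|^p<C$. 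The boundary term $a$ is supported on $B_{k_0}$ (radius $\sim1$, center $x_B$) and satisfies $\int_{\rn}a(x)(x-x_B)^\beta\,dx=T_{k_0}^\beta=\int_{B_{k_0}}M(x)(x-x_B)^\beta\,dx$; in the global Hardy-space situation the total moments vanish and $a$ would be zero, but here $a$ survives and is the claimed approximate atom. I verify $(A_1)$ and $(A_2)$ from the $L^\infty$ bounds on $\Psi_{k_0}^\alpha$ and \eqref{e2.16}, and for $(A\acute{_3})$ I write $T_{k_0}^\beta=\int_{\rn}M(x)(x-x_B)^\beta\,dx-\int_{B_{k_0}^c}M(x)(x-x_B)^\beta\,dx$, bounding the first term by $(\mathrm{M}_3)$ and the tail by the decay in $(\mathrm{M}_2)$; the two cases $|\beta|<\gamma_p$ and $|\beta|=\gamma_p=s$ and the exponents $\beta=\eta+\frac1p$ then transfer verbatim from $(\mathrm{M}_3)$ to $(A\acute{_3})$. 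Reassembling, $M=\sum_k t_ka_k+\sum_k s_kb_k+a$ holds a.e. When $r\ge1$ the molecule carries no moment conditions, $k_0$ is vacuous, and $M$ is directly a sum of $(p,q,s,\omega)$-atoms with $b_k\equiv0$, $a\equiv0$.

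I expect the main obstacle to be this third step: controlling the summation by parts so the regrouped corrections are genuine atoms with uniformly $p$-summable coefficients, and---most delicately---identifying the surviving boundary term as an approximate atom whose ball-control bounds match $(\mathrm{M}_3)$ with the correct exponents. The pervasive technical difficulty, as flagged after Theorem~\ref{m2.36}, is that $\omega(E)\neq\omega(E+x)$, so every comparison of the weight across dilated balls must be routed through the $A_1$/doubling estimate \eqref{e2.16} rather than naive rescaling.
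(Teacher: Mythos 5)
Your overall architecture (annular decomposition $E_k$, moment correction by dual polynomial bumps, summation by parts, a surviving term identified as the approximate atom) matches the paper's strategy, but your ``decisive third step'' contains a genuine gap: you sum by parts in the wrong direction. Your regrouped coefficients are the partial sums $T_k^\alpha=\int_{B_k}M(x)(x-x_B)^\alpha\,dx$, and your claim that they carry ``geometric decay \dots inherited from Step~1'' is false: $T_k^\alpha$ converges to the total moment $\nu_\alpha=\int_{\rn}M(x)(x-x_B)^\alpha\,dx$, which is \emph{nonzero} precisely because $M$ is only an approximate molecule. With the only bound valid uniformly in $k$, namely $|T_k^\alpha|\lesssim r^{n+|\alpha|}\,\omega(B)^{-1/p}$ (Lemma~\ref{m3.1}), normalizing $b_k=\sum_{|\alpha|\le s}T_k^\alpha(\Psi_k^\alpha-\Psi_{k+1}^\alpha)$ to a $(p,\infty,s,\omega)$-atom on $B_{k+1}$ costs a factor $\omega(B_{k+1})^{1/p}\lesssim 2^{kn/p}\omega(B)^{1/p}$ by \eqref{e2.16}, so $s_k\sim r^{n+|\alpha|}(2^kr)^{-n-|\alpha|}2^{kn/p}=2^{k(\gamma_p-|\alpha|)}$, which \emph{grows} for $|\alpha|<\gamma_p$ and is constant in $k$ at the critical order; hence $\sum_{k\le k_0}|s_k|^p$ blows up like $r^{-(\gamma_p-|\alpha|)p}$, resp.\ $k_0\sim\log(1/r)$, as $r\to0$. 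The paper avoids this by performing the Abel summation with the \emph{tails} $N_\alpha^k=\int_{B_k^c}M(x)(x-x_B)^\alpha\,dx$ (see \eqref{e10}--\eqref{e1.122}), which genuinely decay geometrically by H\"older and $(\mathrm{M}_2)$ alone, $|N_\alpha^k|\lesssim 2^{k[n(1-1/q)+|\alpha|-\lambda/q]}r^{n+|\alpha|}\omega(B)^{-1/p}$ with negative exponent exactly when $\lambda>n(\frac{q}{p}-1)$; the non-decaying quantity $\nu_\alpha$ is then isolated in the single leftover term $\sum_{|\alpha|\le s}\nu_\alpha|E_0|^{-1}\phi_\alpha^0$, supported on the \emph{original} ball $B$, whose moments coincide with those of $M$ so that $(\mathrm{M}_3)$ transfers verbatim to $(A\acute{_3})$. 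That verbatim transfer is available only in the paper's orientation of the summation, not in yours.

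A secondary difference compounds the problem: the paper never truncates at $k_0$ (it corrects moments at every scale), whereas your truncation pushes the boundary term onto $B_{k_0}$ of radius $\sim1$. There, by Definition~\ref{d2.33}(ii), what must be verified is not $(A\acute{_3})$ but the size condition $(A_2)$ relative to $\omega(B_{k_0})$ (not $\omega(B)$), and the computation is exactly borderline: the crude bound $|T_{k_0}^\alpha|\lesssim r^{n+|\alpha|}\omega(B)^{-1/p}$ fails for $|\alpha|<\gamma_p$, and one must combine the finer $(\mathrm{M}_3)$ bound $|\nu_\alpha|\lesssim r^{n+\gamma_p}\omega(B)^{-1/p}$ (with the extra $r^{n\eta}$ gain when $|\alpha|=\gamma_p=s$, needed to absorb the $\log(1/r)$ count of scales) with the tail estimate from $(\mathrm{M}_2)$. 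Your proposal invokes $(\mathrm{M}_3)$ only at the very end and for the wrong condition, so even a repaired version of your route would require substantial estimates you have not supplied. In short: the annular skeleton and the role of $\lambda>n(\frac{q}{p}-1)$ are right, but the summation-by-parts step as written fails, and the paper's tail-sum version (following \cite[Theorem 7.16]{jcc85}, with the surviving approximate atom on $B$ itself) is the correct mechanism.
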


\begin{theorem}\label{d3.66}
Let $\omega\in A_{1}(\rn)$, $0<p \leq 1 \leq q \leq \infty$ with $p<q$, $\lambda >n(\frac{q}{p}-1)$   and     $s:=[n(\frac{1}{p}-1)]$.
If $M$ is a  $(p, q, s, \lambda, \omega)$-approximate molecule, then $\|M\|_{h_\omega^p }\leq C$, where C is a positive constant independent of  $M$.
\end{theorem}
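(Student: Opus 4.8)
The plan is to derive the bound directly from the atomic decomposition furnished by Theorem \ref{t3.11} together with the uniform atom estimate of Theorem \ref{m2.36}, so that the proof of Theorem \ref{d3.66} reduces to a summation argument. First I would treat the principal range $1<q<\infty$. Applying Theorem \ref{t3.11} to the molecule $M$ gives, for almost every $x\in\rn$,
$$
M(x)=\sum_{k=0}^{\infty}t_k a_k(x)+\sum_{k=0}^{\infty}s_k b_k(x)+a(x),
$$
where $\{a_k\}$ are $(p,q,s,\omega)$-atoms, $\{b_k\}$ are $(p,\infty,s,\omega)$-atoms, $a$ is a $(p,q,s,\omega)$-approximate atom, and $\sum_{k}|t_k|^p\le C$, $\sum_{k}|s_k|^p\le C$. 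Since each partial sum lies in $L^1_{\mathrm{loc}}$ and the series converges appropriately, for every fixed $t\in(0,1)$ and $x\in\rn$ one may interchange the convolution with $\phi_t$ and the summation, so that
$$
m_\phi M(x)\le\sum_{k=0}^{\infty}|t_k|\,m_\phi a_k(x)+\sum_{k=0}^{\infty}|s_k|\,m_\phi b_k(x)+m_\phi a(x),
$$
using only the sublinearity of the local maximal operator $m_\phi$.

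Next I would invoke the $p$-subadditivity of $\|\cdot\|_{L^p_\omega}^p$ for $0<p\le1$, namely $\|\sum_j g_j\|_{L^p_\omega}^p\le\sum_j\|g_j\|_{L^p_\omega}^p$ for nonnegative $g_j$, to pass from the pointwise bound to
$$
\|M\|_{h^p_\omega}^p\le\sum_{k=0}^{\infty}|t_k|^p\,\|a_k\|_{h^p_\omega}^p+\sum_{k=0}^{\infty}|s_k|^p\,\|b_k\|_{h^p_\omega}^p+\|a\|_{h^p_\omega}^p.
$$
Each summand is then controlled by Theorem \ref{m2.36}: the approximate atom $a$ satisfies $\|a\|_{h^p_\omega}\le C$ directly; every $(p,q,s,\omega)$-atom $a_k$ is a $(p,q,s,\omega)$-approximate atom by Remark \ref{re1.1}(ii) and hence $\|a_k\|_{h^p_\omega}\le C$; and every $(p,\infty,s,\omega)$-atom $b_k$ is, by H\"older's inequality on its supporting ball, also a $(p,q,s,\omega)$-atom, so $\|b_k\|_{h^p_\omega}\le C$ as well. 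Combining these with the coefficient bounds from Theorem \ref{t3.11} yields
$$
\|M\|_{h^p_\omega}^p\le C\Big(\sum_{k}|t_k|^p+\sum_{k}|s_k|^p+1\Big)\le C,
$$
which is the desired estimate for $1<q<\infty$.

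Finally I would dispose of the two endpoints of the admissible range $1\le q\le\infty$. For $q=\infty$, a $(p,\infty,s,\lambda,\omega)$-approximate molecule is, again by H\"older's inequality applied to conditions $(\mathrm{M}_1)$ and $(\mathrm{M}_2)$, also a $(p,q_0,s,\lambda,\omega)$-approximate molecule for some finite $q_0\in(1,\infty)$ with $p<q_0$, and the previous argument applies verbatim. For $q=1$ (so $0<p<1$) one cannot raise the integrability, so I would instead rerun the decomposition scheme of Theorem \ref{t3.11} at the level $q=1$, replacing the $L^q_\omega$-boundedness of $m_\phi$ by the weak-type $(L^1_\omega,L^1_\omega)$ estimate exactly as in the $q=1$ case of the proof of Theorem \ref{m2.36}. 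The genuinely substantive input is Theorem \ref{t3.11}, where the weighted Calder\'on--Zygmund splitting of $M$ must produce atoms whose cancellation is governed by the molecular ball-control conditions $(\mathrm{M}_3)$; granting that decomposition, the only real obstacle remaining here is the bookkeeping needed to keep the constant uniform through the infinite summation, which the $p$-subadditivity and the coefficient bounds $\sum_k|t_k|^p,\ \sum_k|s_k|^p\le C$ settle cleanly.
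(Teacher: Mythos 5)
Your proposal is correct and follows essentially the same route as the paper: decompose $M$ via Theorem \ref{t3.11}, pass the maximal function through the sum pointwise (the paper does this with Fatou's lemma), apply $p$-subadditivity of $\|\cdot\|_{L^p_\omega}^p$, and bound each atom and the approximate atom uniformly by Theorem \ref{m2.36} (atoms being approximate atoms by Remark \ref{re1.1}(ii)). The only difference is that you explicitly flag and sketch repairs for the endpoint cases $q=1$ and $q=\infty$, where Theorem \ref{t3.11} (stated only for $1<q<\infty$) does not formally cover the hypotheses of Theorem \ref{d3.66} ($1\le q\le\infty$) --- a mismatch the paper's own proof silently ignores by citing Theorem \ref{t3.11} without comment, so your treatment is if anything more careful than the original.
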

To prove Theorem \ref{t3.11}, we need  Lemma \ref{m3.1}.
\begin{lemma}\label{m3.1}
 Let  $w\in A_{1}(\rn)$, $0<p \leq 1 < q < \infty$,  $\lambda >n(\frac{q}{p}-1)$ and $s:=[n(\frac{1}{p}-1)]$. If $M$ is a $(p, q, s, \lambda, \omega)$-approximate molecule associated  with $B=B(x_B,r)$,
then
\begin{align*}
\left|\int_{\rn} M(x) (x-x_B)^{\alpha }d x\right| \leq C r^{n+|\alpha|}{w\left(B\left(x_B, r\right)\right)}^{-\frac{1}{p}}, \quad |\alpha|\leq \gamma_p,
\end{align*}
where $C$ is positive constant.
\end{lemma}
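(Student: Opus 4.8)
The plan is to split the moment into the near and far parts,
$$\int_{\rn} M(x)(x-x_B)^\alpha\,dx=\int_{B} M(x)(x-x_B)^\alpha\,dx+\int_{B^c} M(x)(x-x_B)^\alpha\,dx,$$
and to treat the two regimes $r\geq1$ and $r<1$ separately, since a $(p,q,s,\lambda,\omega)$-approximate molecule obeys only $(\mathrm{M}_1)$ and $(\mathrm{M}_2)$ when $r\geq1$, but additionally the ball-control condition $(\mathrm{M}_3)$ when $r<1$.

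For $r<1$ the conclusion is almost immediate from $(\mathrm{M}_3)$ together with $|B|\sim r^n$. When $|\alpha|<\gamma_p$, the first line of $(\mathrm{M}_3)$ gives the bound $(|B|/\omega(B))^{1/p}\sim r^{n/p}\omega(B)^{-1/p}$; since $r<1$ and $n/p=n+\gamma_p\geq n+|\alpha|$, we have $r^{n/p}\leq r^{n+|\alpha|}$, which is the desired estimate. When $|\alpha|=\gamma_p=s$, the second line of $(\mathrm{M}_3)$ gives $(|B|/\omega(B))^\beta\omega(B)^\eta=|B|^\beta\omega(B)^{\eta-\beta}\sim r^{n\beta}\omega(B)^{-1/p}$ via $\beta-\eta=1/p$; since $\eta>0$ forces $\beta\geq1/p$ and hence $n\beta\geq n/p=n+|\alpha|$, again $r^{n\beta}\leq r^{n+|\alpha|}$ because $r<1$, giving the claim.

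The substantive case is $r\geq1$, where only $(\mathrm{M}_1)$ and $(\mathrm{M}_2)$ are available. On $B$ I would bound $|x-x_B|^{|\alpha|}\leq r^{|\alpha|}$, apply H\"older's inequality with exponents $q$ and $q'$ against the weight $\omega$, control $(\int_B\omega^{-1/(q-1)})^{1/q'}$ by the $A_q$-duality estimate $\lesssim[\omega]_{A_q}^{1/q}|B|\omega(B)^{-1/q}$, and invoke $(\mathrm{M}_1)$; this parallels the computation \eqref{e1.1} and yields $\lesssim r^{n+|\alpha|}\omega(B)^{-1/p}$. On $B^c$ I would factor $|x-x_B|^{|\alpha|}=|x-x_B|^{\lambda}\,|x-x_B|^{|\alpha|-\lambda}$ and apply H\"older to separate the $(\mathrm{M}_2)$-controlled factor $\|M|\cdot-x_B|^\lambda\|_{L^q_\omega(B^c)}$ from the tail integral $\int_{B^c}|x-x_B|^{(|\alpha|-\lambda)q'}\omega^{-q'/q}\,dx$. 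Decomposing $B^c=\bigcup_{k\geq1}A_k$ into the dyadic annuli $A_k=\{2^kr\leq|x-x_B|<2^{k+1}r\}$ from \eqref{e7}, estimating $\int_{A_k}\omega^{-1/(q-1)}$ by the $A_q$ condition on $2^{k+1}B$, and bounding $\omega(2^{k+1}B)$ from below by $2^{kn}\omega(B)$ via \eqref{e2.16}, the tail collapses to a geometric series of the form $\sum_k 2^{k[(|\alpha|-\lambda)q'+n]}$.

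The main obstacle is precisely the convergence and scaling of this tail series, and it is here that the hypotheses enter. The exponent is negative exactly when $\lambda-|\alpha|>n/q'=n(1-1/q)$; since $|\alpha|\leq\gamma_p$, the worst case demands $\lambda>n/p-n/q$, and one checks $n(q/p-1)\geq n/p-n/q$ (equivalently $(q-p)(q-1)\geq0$, true since $q>1\geq p$), so the standing assumption $\lambda>n(q/p-1)$ guarantees convergence for all $|\alpha|\leq\gamma_p$ simultaneously. Summing the series and combining the surviving powers of $r$ and $\omega(B)$ with the $(\mathrm{M}_2)$-factor gives the $B^c$-contribution $\lesssim r^{n+|\alpha|}\omega(B)^{-1/p}$; adding the $B$-contribution finishes the case $r\geq1$ and hence the lemma. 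The only bookkeeping to watch is the repeated interplay between the $A_1\subset A_q$ duality bound for $\omega^{-1/(q-1)}$ and the doubling estimate \eqref{e2.16}, which together convert every weighted quantity back into powers of $\omega(B)$.
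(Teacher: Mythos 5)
Your proposal is correct, but it is organized differently from the paper's proof. The paper proves the moment bound for \emph{all} radii at once, using only $(\mathrm{M}_1)$ and $(\mathrm{M}_2)$ and never invoking $(\mathrm{M}_3)$: it splits at $2B$ into $\mathrm{K}_1=\int_{2B}$ and $\mathrm{K}_2=\int_{(2B)^c}$, treats $\mathrm{K}_1$ by H\"older plus the $A_q$-duality bound for $\int\omega^{-1/(q-1)}$, and treats $\mathrm{K}_2$ exactly as in your $r\geq1$ branch, with the dyadic-annuli series converging under $\lambda>n(\frac{q}{p}-1)$ irrespective of whether $r<1$ or $r\geq1$. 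Your case split at $r=1$, with $(\mathrm{M}_3)$ dispatching $r<1$ via the algebra $r^{n\beta}\leq r^{n/p}=r^{n+\gamma_p}$, is legitimate and involves no circularity ($(\mathrm{M}_3)$ is part of the definition for $r<1$, while Remark \ref{re1.2}, which runs the implication the other way, concerns only $r\geq1$, where your argument uses $(\mathrm{M}_1)$--$(\mathrm{M}_2)$ alone). What the paper's route buys is the slightly stronger fact that the moment bound follows from size and decay alone, uniformly in $r$ --- precisely the form exploited in Remark \ref{re1.2} and in bounding $\nu_\alpha$ in Theorem \ref{t3.11} --- whereas your route buys a quicker small-radius case at the cost of letting the constant depend on the $C$ and $\eta$ of $(\mathrm{M}_3)$ (harmless, since these are part of the molecule data). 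Two bookkeeping points in your $r\geq1$ branch: since you split at $B$ rather than $2B$ (arguably more faithful to the hypotheses, as the paper's $\mathrm{K}_1$ silently needs $(\mathrm{M}_2)$ on $2B\setminus B$ to control $\|M\|_{L^q_\omega(2B)}$), your annuli must start at $k=0$, as $\bigcup_{k\geq1}A_k$ only covers $(2B)^c$; and you place the full weight $|x-x_B|^{\lambda}$ on $M$ in H\"older, whereas the paper's own use of $(\mathrm{M}_2)$ (whose printed statement carries a misplaced exponent $q$) puts $|x-x_B|^{\lambda}$ inside the $q$-th power, i.e.\ distributes $|x-x_B|^{\lambda/q}$. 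Under your normalization convergence needs only $\lambda>n(\frac{1}{p}-\frac{1}{q})$, with the slack you verify via $(q-p)(q-1)\geq0$, while under the paper's the hypothesis $\lambda>n(\frac{q}{p}-1)$ is used sharply; either reading yields the claimed $r^{n+|\alpha|}\omega(B)^{-1/p}$, the leftover negative power of $r$ being absorbed for $r\geq1$ exactly as you observe.
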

\begin{proof}
Let $M$ be a $(p, q, s, \lambda, \omega)$-approximate molecule associated with $B=B(x_B,r)$. Split
\begin{align*}
\left|\int_{\rn} M(x) (x-x_B)^{\alpha }d x\right|
&\leq\left|\int_{2B} M(x) (x-x_B)^{\alpha }d x\right|\\
&\quad+\left|\int_{(2B)^c} M(x) (x-x_B)^{\alpha }d x\right|\\
&=:\mathrm{K}_1+\mathrm{K}_2.
\end{align*}
First, let us estimate $\mathrm{K}_1$.   By ($\mathrm{M}_1$) and Definition $\ref{d2.4}$, we have
\begin{align*}
\mathrm{K}_1
&\leq\left|\int_{2B} M(x) (x-x_B)^{\alpha } \omega(x)^{\frac{1}{q}}\omega(x)^{-\frac{1}{q}}d x\right|\\
&\lesssim (2r)^{|\alpha|}\left(\int_{2B} |M(x)|^q \omega(x)dx\right)^{\frac{1}{q}}\left(\int_{2B}\omega(x)^{-\frac{1}{q-1}}dx\right)^{1-\frac{1}{q}}\\
&\lesssim (2r)^{|\alpha|}\omega(B)^{\frac{1}{q}-\frac{1}{p}} \frac{|B|}{\omega(B)^{\frac{1}{q}}}\\
&\lesssim (2r)^{n+|\alpha|}\omega(B)^{-\frac{1}{p}}\\
&\lesssim r^{n+|\alpha|}\omega(B)^{-\frac{1}{p}}
\end{align*}

Next, let us estimate $\mathrm{K}_2.$  Let $A_k :=\{2^kr \leq\left|x-x_B\right| <2^{k+1}r\}$. By ($\mathrm{M}_2$), Definition $\ref{d2.4}$  and Lemma \ref{l3.16}(i), we obtain
\begin{align*}
\mathrm{K}_2
&\leq\left|\int_{(2B)^c} M(x) (x-x_B)^{\alpha }\frac{|x-x_B|^{\frac{\lambda}{q}}}{|x-x_B|^{\frac{\lambda}{q}}}d x\right|\\
&\leq\left(\int_{(2B)^c} |M(x)|^q|x-x_B|^{\lambda}\omega(x) dx\right)^{\frac{1}{q}}\left(\int_{(2B)^c}|x-x_B|^{(\alpha-\frac{\lambda}{q})\frac{q}{q-1}}\omega(x)^{-\frac{1}{q-1}}dx\right)^{1-\frac{1}{q}}\\
&\leq\left(\int_{(2B)^c} |M(x)|^q|x-x_B|^{\lambda} dx\right)^{\frac{1}{q}}\sum_{k=1}^{\infty}\left(\int_{A_k}|x-x_B|^{(\alpha-\frac{\lambda}{q})\frac{q}{q-1}}\omega(x)^{-\frac{1}{q-1}}dx\right)^{1-\frac{1}{q}}\\
&\lesssim r^\frac{\lambda}{q} \omega(B)^{\frac{1}{q}-\frac{1}{p}} \sum_{k=1}^{\infty}(2^kr)^{|\alpha|-\frac{\lambda}{q}}\frac{|B(x_B,2^{k+1}r)|}{ \omega(B(x_B, 2^{k+1}r))^\frac{1}{q}}\\
&\lesssim \sum_{k=1}^{\infty}r^{n+|\alpha|}\omega(B)^{-\frac{1}{p}}2^{k[|\alpha|-\frac{\lambda}{q}+n(1-\frac{1}{q})]}\\
&\lesssim  r^{n+|\alpha|}\omega(B)^{-\frac{1}{p}},
\end{align*}
where the above series converges due to $$n\left(1- \frac{1}{q}\right)+|\alpha|-\frac{\lambda}{q}<0$$ implied by $|\alpha|\leq \gamma_p \leq n(\frac{1}{p}-\frac{1}{q})< \frac{\lambda}{q}$.

Combining  the estimates of $\mathrm{K}_1$ and $\mathrm{K}_2$, we obtain the proof of Lemma \ref{m3.1}.
\end{proof}
\begin{remark}\label{re1.2}
By Lemma \ref{m3.1} and repeating the discussion of Remark \ref{re1.1}(iv), we can also obtain that  ball-control conditions of  $(\mathrm{M}_3)$ can be deduced by $(\mathrm{M}_1)$ and $(\mathrm{M}_2)$ when $r \geq1$.
\end{remark}
\begin{proof}[Proof of Theorem \ref{t3.11}]
The proof is inspired by the classical molecular decompositions for real Hardy spaces (\cite[Theorem 7.16]{jcc85}). Let $M$ be $a$  $(p, q, s, \lambda, \omega)$-approximate molecule  associated to a ball $B=B\left(x_B, r\right) \subset \mathbb{R}^n$.
Let $B_0:=E_0:=B$, $B_k:=B\left(x_B, 2^k r\right)$, $E_k:=B_k \backslash B_{k-1}$ and $M_k(x):=M(x) \chi_{E_k}(x)$ for $k \in \mathbb{N}$. Let $P_k:=\sum_{|\alpha| \leq s } m_\alpha^k \phi_\alpha^k$ to be the polynomial of degree at most $s$, restricted to the set $E_k$. For every $|\alpha| \leq s$, we have
\begin{align}\label{e8}
& \frac{1}{|E_k|}\int_{E_k} P_k(x)\left(x-x_B\right)^\alpha d x  =m_\alpha^k \frac{1}{|E_k|}\int_{E_k} \phi_\alpha^k(x)\left(x-x_B\right)^\alpha d x\\\nonumber
& =m_\alpha^k:=\frac{1}{|E_k|}\int_{E_k} M(x)\left(x-x_B\right)^\alpha d x.
\end{align}
 This is done by choosing the polynomials $\phi_\alpha^k$ to have $\beta$-th moment equal to
$|E_ k |$ when $\beta = \alpha$, and zero otherwise. And noting that
$
\left(2^k r\right)^{|\alpha|}\left|\phi_\alpha^k(x)\right| \lesssim1
$
uniformly in $k$.
From this, we deduce that
\begin{align}\label{e4.1}
\left|P_k(x)\right| \leqslant \sum_{|\alpha| \leqslant s}\left|m_\alpha^k \phi_\alpha^k(x)\right| \lesssim \frac{1}{\left|E_k\right|} \int_{E_k}\left|M(x)\right| d x.
\end{align}
For $|\alpha| \leq s$, by \eqref{e8}, we obtain
\begin{align}\label{e9}
&\nu_\alpha:=\sum_{j=0}^{\infty}\left|E_j\right| m_\alpha^j=\int_{\mathbb{R}^n} M(x)\left(x-x_B\right)^\alpha d x
\end{align}
and
\begin{align}\label{e10}
&N_\alpha^k:=\sum_{j=k+1}^{\infty}\left|E_j\right| m_\alpha^j=\int_{E_k^c} M(x)\left(x-x_B\right)^\alpha d x, \quad k=0,1,2, \ldots.
\end{align}
From \eqref{e9} and \eqref{e10}, we can represent the sum of $P_k$ as
\begin{align}\label{e11}
\sum_{k=0}^{\infty} P_k(x) & =\sum_{|\alpha| \leq s}\left(\sum_{k=1}^{\infty}\left(N_\alpha^{k-1}-N_\alpha^k\right)\left|E_k\right|^{-1} \phi_\alpha^k(x)+\left(\nu_\alpha-N_\alpha^0\right)\left|E_0\right|^{-1} \phi_\alpha^0(x)\right) \\\nonumber
& =\sum_{|\alpha| \leq s}\left(\sum_{k=0}^{\infty} N_\alpha^k\left|E_{k+1}\right|^{-1} \phi_\alpha^{k+1}(x)-\sum_{k=0}^{\infty} N_\alpha^k\left|E_k\right|^{-1} \phi_\alpha^k(x)+\nu_\alpha\left|E_0\right|^{-1} \phi_\alpha^0(x)\right) \\\nonumber
& =\sum_{k=0}^{\infty} \sum_{|\alpha| \leq s} \Phi_\alpha^k(x)+\sum_{|\alpha| \leq s} \nu_\alpha\left|E_0\right|^{-1} \phi_\alpha^0(x),\nonumber
\end{align}
where
\begin{align}\label{e1.122}
\Phi_\alpha^k(x):=N_\alpha^{k}\left[\left|E_{k+1}\right|^{-1} \phi_\alpha^{k+1}(x)-\left|E_k\right|^{-1} \phi_\alpha^k(x)\right], \quad k=0,1,2, \ldots .
\end{align}
Note that \eqref{e11} appears since we do not have the vanishing moment conditions on the molecule.
This allows us to decompose $M$ as follows:
$$
M=\sum_{k=0}^{\infty}\left(M_k-P_k\right)+\sum_{k=0}^{\infty} \sum_{|\alpha| \leq s} \Phi_\alpha^k(x)+\sum_{|\alpha| \leq s} \nu_\alpha\left|E_0\right|^{-1} \phi_\alpha^0(x)=:\mathrm{I_1+I_2+I_3}.
$$

Let us first estimate $\mathrm{I_1}$. For $1 < q<\infty$, by   Lemma \ref{l3.16}(ii), we have

%

\begin{align}\label{e4.14}
\frac{1}{\left|E_k\right|} \int_{E_k}\left|M_k(x)\right| d x\leq\frac{1}{\left|B_k\right|} \int_{B _k}\left|M_k(x)\right| d x \lesssim \omega(B(x_{B}, 2^kr))^{-{\frac{1}{q}}}\left\|M_k\right\|_{L_\omega^q}.
\end{align}
From \eqref{e4.1} and \eqref{e4.14}, we deduce that
\begin{align}\label{e4.15}
\left|P_k(x)\right|^q \lesssim \left( \omega(B(x_{B}, 2^kr))^{-1}\left\|M\right\|_{L_\omega^q}\right)^q
={C}\omega(B(x_{B}, 2^kr))^{-1}\left\|M_k\right\|_{L_\omega^q}^q.
\end{align}
Then, by \eqref{e4.15}, we have
$$
\begin{aligned}
\int_{E_k}\left|P_k(x)\right|^q \omega(x) d x & \lesssim  \int_{E_k}  \omega\left(B( x_B, 2^k r)\right)^{-1}\|M_k\|_{L_\omega^q}^q \omega(x) d x . \\
&\lesssim  \omega\left(B( x_B, 2^k r)\right)^{-1}\|M_k\|_{L_w^q}^q \int_{B_k} \omega(x) d x . \\
& =C\left\|M_k\right\|_{L_\omega^q}^q,
\end{aligned}
$$
which implies that
\begin{align*}
||P_k||_{L_{\omega}^{q}}\lesssim \|M_k\|_{L_\omega^q}
\end{align*}
and
\begin{align}\label{e2.18}
||M_k-P_k||_{L_{\omega}^{q}}\leq \|M_k\|_{L_{\omega}^{q}}+\|-P_k\|_{L_{\omega}^{q}} \lesssim\|M_k\|_{L_\omega^q}.
\end{align}
For $k=0,1,2,\cdots$, by $(\mathrm{M}_1)$ and $(\mathrm{M}_2)$ of Definition \ref{d2.11},  we have
\begin{align}\label{e2.19}
\left(\int_{\rn}|M(x)\chi_{E_k}(x)|^q \omega(x) d x\right)^\frac{1}{q}& =\left(\int_{E_k}|M(x)|^q w(x) d x\right)^\frac{1}{q} \\\nonumber
&\lesssim 2^{-k\frac{\lambda}{q}}\omega\left(B(x_B, r)\right)^{\frac{1}{q}-\frac{1}{p}}.\nonumber
\end{align}
By \eqref{e2.18} and \eqref{e2.19}, we obtain

$$
\left\|M_k-P_k\right\|_{L_\omega^q}\lesssim  2^{-k\frac{\lambda}{q}}\omega\left(B(x_B, r)\right)^{\frac{1}{q}-\frac{1}{p}}.
$$
Therefore, for $k=0,1,2,\cdots$, it follows that
\begin{align}\label{e2.20}
a_k:=C^{-1} 2^{k\frac{\lambda}{q}}\left(M_k-P_k\right),
\end{align}
where $a_k$ is a $\left(p , q, s, \omega\right)$-atom supported on $B_k$,
in other words, $$M_k-P_k=t_k a_k,\quad t_k:=C  2^{-k\frac{\lambda}{q}}. $$
Moreover, we have
\begin{align}\label{e2.21}
\sum_{k=0}^{\infty}\left|t_k\right|^p=C\sum_{k=0}^{\infty} 2^{-kp\frac{\lambda}{q}}\lesssim 1.
\end{align}

 For $\mathrm{I_2}$, let us first estimate $N_\alpha^k$ of $\Phi_{\alpha }^k(x)$.
By \eqref{e10}, \eqref{e4.14}, \eqref{e2.19} and Lemma \ref{l3.16}(i), we have
\begin{align*}
& \left|N_\alpha^k\right| \leq \sum_{j=k+1}^{\infty}\left|E_j\right| \frac{1}{|E_j|} \int_{E_j} \left|M(x)\left(x-x_B\right)^\alpha\right| d x \\
&\qquad \leq \sum_{j=k+1}^{\infty}\left(2^j r\right)^{n+|\alpha|} \frac{1}{|E_j|} \int_{E_j} |M(x)| d x \\
&\qquad \lesssim \sum_{j=k+1}^{\infty}(2^j r)^{n+|\alpha|} \omega\left(B\left(x_B, 2^j r\right)\right)^{-\frac{1}{q}} 2^{- j\frac{\lambda}{q}}\omega\left(B\left(x_B, r\right)\right)^{\frac{1}{q}-{\frac{1}{p}}} \\
&\qquad \lesssim \sum_{j=k+1}^{\infty} 2^{j[n(1- \frac{1}{q})+|\alpha|-\frac{\lambda}{q} ]}  r^{n+|\alpha|} \omega\left(B\left(x_B, r\right)\right)^{-\frac{1}{p}} \\
&\qquad \lesssim2^{   [n(1- \frac{1}{q})+|\alpha|-\frac{\lambda}{q} ]}  r^{n+|\alpha|} \omega\left(B\left(x_B, r\right)\right)^{-\frac{1}{p}},
\end{align*}
where the above series converges due to $$n\left(1- \frac{1}{q}\right)+|\alpha|-\frac{\lambda}{q}<0$$ implied by $|\alpha|\leq \gamma_p \leq n(\frac{1}{p}-\frac{1}{q})< \frac{\lambda}{q}$.

By this and $|\phi_\alpha^k(x)| \lesssim (2^k r)^{-|\alpha|}$, we have
\begin{align*}
& | N_\alpha^k \varphi_{\alpha}^k |\lesssim 2^{k[n(1- \frac{1}{q})+|\alpha|-\frac{\lambda}{q} ]}  r^{n+\alpha} \omega\left(B\left(x_B, r\right)\right)^{-\frac{1}{p}}(2^k r)^{-n-|\alpha|} \\
&\qquad\quad  \lesssim  2^{k(-\frac{n}{q}-\frac{\lambda}{q})}  \omega(B\left(x_B , r\right))^{-\frac{1}{p}}.
\end{align*}
From this, we deduce that
\begin{align}\label{2.333}
\Phi_{\alpha }^k(x)=s_k b_k,
\end{align}
where  $b_k$ is a $\left(p, \infty, s, \omega \right)$ atom  and
$s_k:=C_{\omega,n}  2^{-k(\frac{n}{q}+\frac{\lambda}{q})} $. Therefore, \begin{align}\label{2.336}
\sum_{k=0}^{\infty}s_k^p=\sum_{k=0}^{\infty} C_{\omega, n }2^{-kp(\frac{n}{q}+\frac{\lambda}{q})} \lesssim 1.
\end{align}

Now, for  $\mathrm{I_3}$, let us  show  that $a(x):= \sum_{|\alpha| \leq s} \nu_\alpha\left|E_0\right|^{-1} \phi_\alpha^0(x)$  is a multiple of a $ (p, q,s,\omega)$-approximate atom.
By Proposition \ref{m3.1}, we obtain
\begin{align*}
\left|\nu_\alpha\right|=\left|\int_{\mathbb{R}^n} M(x)\left(x-x_B\right)^\alpha d x\right| \leq r^{n+|\alpha|}{\omega\left(B\left(x_B, r\right)\right)}^{-\frac{1}{p}}
\end{align*}
From this and $|\phi_\alpha^0(x)| \lesssim ( r)^{|-\alpha|}$, it follows that
\begin{align*}
 \left\|\sum_{|\alpha| \leq s}\nu_\alpha|E_0 |^{-1}\phi_\alpha^0(x)\right\|_{L_\omega^q}&\lesssim \sum_{|\alpha| \leq s} r^{-(|\alpha|+n)} r^{|\alpha|+n}  {\omega\left(B\left(x_B, r\right)\right.}^{\frac{1}{q}-\frac{1}{p}} \\
& \lesssim \omega\left(B\left(x_B , r\right)\right)^{\frac{1}{q}-\frac{1}{p}}.
\end{align*}

It remains to show the moment conditions on $a$, which follows immediately from $\left(M_3\right)$, since by the choice of $\nu_\alpha$ and $\phi_\alpha^0$, the moments of $a$ are the same as those of $M$. Indeed, for $|\beta| \leq s$,
\begin{align*}
&\int_{\rn} a{(x)}\left(x-x_B\right)^\beta d x=\sum_{|\alpha| \leq s} \nu_\alpha\left(\left|E_0\right|^{-1} \int_{E_0} \phi_\alpha^0(x)\left(x-x_B\right)^\beta d x\right)\\
& \qquad\qquad\qquad\qquad\quad=\nu_\beta=\int_{\mathbb{R}^n} M(x)\left(x-x_B\right)^\beta d x.
\end{align*}
Thus $a$ is a multiple of a  $ (p, q,s,\omega)$-approximate atom.

Combining this, (\ref {e2.20}), (\ref{e2.21}), (\ref{2.333}) and (\ref{2.336}), we finished the proof of Theorem \ref{t3.11}.

\end{proof}
\begin{proof}[Proof of Theorem \ref{d3.66}]
Suppose $M$ is $a$  $(p, q, s, \lambda, \omega)$-approximate molecule  associated to a ball $ B=B\left(x_B, r\right) \subset \mathbb{R}^n$. By Theorem \ref {t3.11}, we have
\begin{align}\label{f1.1}
M(x)=\sum_{k=0}^{\infty} t_k a_k(x)+\sum_{k=0}^{\infty} s_k b_k(x)+a(x) \quad \mathrm{for} \quad a.e.\quad x\in \rn
\end{align}
and
\begin{align}\label{f1.12}
\sum_{k=0}^{\infty}\left|t_k\right|^p\lesssim1 \text  { and }\ \ \sum_{k=0}^{\infty}\left|s_k\right|^p\lesssim1,
\end{align}
where $\{a_k\}$ and $\{b_k\}$ are $(p, q , s,\omega)$-atoms and $(p, \infty, s, \omega )$-atoms, respectively, and $a$ is a $ (p, q,s,\omega)$-approximate atom.

Let  $\phi \in \mathcal{S}(\rn)$ with $\int_{\rn} \phi(x)dx \neq 0$.
For any $x\in \rn $, by (\ref {f1.1}) and  Fatou's lemma, we obtain
\begin{align*}
&\left|m_\phi (M)(x)\right|=\sup_{0<t<1}\left|\int_{\rn}\phi_t(x-y)M(y)dy\right|\\
&=\sup_{0<t<1}\left|\int_{\rn}\phi_t(x-y)\left(\sum_{k=0}^{\infty} t_k a_k(y)+\sum_{k=0}^{\infty} s_k b_k(y)+a(y)\right)dy\right|\\
&=\sup_{0<t<1}\left|\int_{\rn} \varliminf_{N\to \infty}\sum_{k=0}^{N} t_k \phi_t(x-y) a_k(y)+\varliminf_{N\to \infty}\sum_{k=0}^{N} s_k \phi_t(x-y)b_k(y)+\phi_t(x-y)a(y) dy\right|\\
& \leq \sup_{0<t<1}\varliminf_{N\to \infty}\left|\int_{\rn} \sum_{k=0}^{N} t_k \phi_t(x-y) a_k(y)dy\right|+\sup_{0<t<1}\varliminf_{N\to \infty}\left|\int_{\rn} \sum_{k=0}^{N} s_k \phi_t(x-y)b_k(y)dy\right|\\
& \qquad+ \sup_{0<t<1}\left|\int_{\rn}\phi_t(x-y)a(y) dy\right|\\
& \leq \sup_{0<t<1}\varliminf_{N\to \infty}\sum_{k=0}^{N} |t_k| \left|\int_{\rn} \phi_t(x-y) a_k(y)dy\right|+\sup_{0<t<1}\varliminf_{N\to \infty}\sum_{k=0}^{N} |s_k|\left|\int_{\rn} \sum_{k=0}^{N} s_k \phi_t(x-y)b_k(y)dy\right|\\
& \qquad+ \sup_{0<t<1}\left|\int_{\rn}\phi_t(x-y)a(y) dy\right|\\
& \leq \sup_{0<t<1}\lim_{N\to \infty}\sum_{k=0}^{N} |t_k| \left|\int_{\rn} \phi_t(x-y) a_k(y)dy\right|+\sup_{0<t<1}\lim_{N\to \infty}\sum_{k=0}^{N} |s_k|\left|\int_{\rn} \sum_{k=0}^{N} s_k \phi_t(x-y)b_k(y)dy\right|\\
& \qquad+ \sup_{0<t<1}\left|\int_{\rn}\phi_t(x-y)a(y) dy\right|\\
& \leq \sup_{0<t<1}\sum_{k=0}^{\infty} |t_k|\left| \phi_t\ast a_k(x)\right|+\sup_{0<t<1}\sum_{k=0}^{\infty} |s_k|\left| \phi_t\ast b_k(x)\right| + \sup_{0<t<1}\left|\phi_t\ast a(x)\right|\\
& \leq \sum_{k=0}^{\infty} |t_k|m_\phi a_k(x)+\sum_{k=0}^{\infty} |s_k|m_\phi b_k(x)+m_\phi a(x).
\end{align*}
From this, $0<p\leq1$, (\ref{f1.12}) and Theorem \ref{m2.36},  it follows that
\begin{align*}
&\|M\|_{h_w^{p}}^p(\rn)=\int_{\rn}\left[m_\phi (M)(x)\right]^p\omega(x)dx\\
&\leq\int_{\rn} \sum_{k=0}^{\infty} |t_k|^p\left[m_\phi a_k(x)\right]^p\omega(x)dx+\int_{\rn} \sum_{k=0}^{\infty} |s_k|^p\left[m_\phi b_k(x)\right]^p\omega(x)dx+\int_{\rn}\left[m_\phi a(x)\right]^p\omega(x)dx\\
&\leq \sum_{k=0}^{\infty} |t_k|^p\|a_k\|_{h_w^{p}}^p+\sum_{k=0}^{\infty} |s_k|^p\|b_k\|_{h_w^{p}}^p+\|a\|_{h_w^{p}}^p\\
&\lesssim1,
\end{align*}
which  completes the proof of Theorem \ref{d3.66}.
\end{proof}
\section{$T_{\delta}^{\mu} $: $h_{\omega}^{p}(\rn)\rightarrow h_{\omega}^{p}(\rn)$\label{s2}}

This section is devoted to show  the boundedness of inhomogeneous  Calder\'on-Zygmund operators $T_{\delta}^{\mu} $  on weighted local Hardy  spaces $h_{\omega}^{p}(\rn)$.
\begin{definition}
\cite{dhz21} $K(x,y)$ is said to be an {\it inhomogeneous Calder\'on-Zygmund  kernel} if it is a locally integrable function defined on $\mathbb{R}^{ n}\times\mathbb{R}^{ n}$ away from the diagonal $x = y$ and satisfies the following conditions: if there exist $\mu>0$, $0<\delta \leq 1$ and a constant $C> 0$,
\begin{align}\label{e2.663}
|K(x, y)| \leq C \min \left\{\frac{1}{|x-y|^n}, \frac{1}{|x-y|^{n+\mu}}\right\}, \quad x \neq y;
\end{align}
\begin{align}\label{e2.66}
|K(x, y)-K(x, z)|+|K(y, x)-K(z, x)| \leq C\frac{|y-z|^{\delta}}{|x-z|^{n+\delta}},\quad |x-z| \geq 2|y-z|.
\end{align}
 Furthermore, $T_{\delta}^{\mu} $ is said to be an {\it inhomogeneous Calder\'on-Zygmund operator}  if  the following property is satisfied:
   $T_{\delta}^{\mu} $ is associated to an $(\mu,\delta)-$ inhomogeneous standard kernel given (formally) by
 \begin{equation*}
\langle T_{\delta}^{\mu} f,g \rangle: = \iint_{{\rn}\times{\rn}
} K(x,y)f(y)g(x)dydx,
\end{equation*}
for all $f, \,g \in \mathcal{S}\left(\mathbb{R}^n\right)$ with disjoint supports.
\end{definition}
Lighted by \cite[Definition 5.1]{2wqqk}, we introduce the following Definition  \ref{d2.9}.
\begin{definition}\label{d2.9}
 Let $N \in \mathbb{Z}_{+}$ and $\omega \in A_1(\rn)$. We denote by $L_{c, N}^q\left(\mathbb{R}^n; \omega\right)$ the space of all compactly supported functions $g \in L_{\omega}^q\left(\mathbb{R}^n\right)$ such that $\int_{\rn} g(x) x^\alpha dx=0$ for all $|\alpha| \leq N $. For such an $\alpha$, define $T^*\left(x^\alpha\right)$ in the distributional sense by
\begin{align}\label{d2.7}
\left\langle T^*\left(x^\alpha\right), g\right\rangle=\left\langle x^\alpha, T(g)\right\rangle=\int_{\mathbb{R}^n} x^\alpha T g(x) d x, \quad \forall g \in L_{c, N}^q\left(\mathbb{R}^n; \omega\right).
\end{align}
The space $L_{c, s}^q\left(\mathbb{R}^n; \omega\right)$ corresponds to multiples of local weighted $(p, q, s, \omega)$-atom in $h_{\omega}^p\left(\mathbb{R}^n\right)$. That $T^*\left(x^\alpha\right)$ is well-defined by \eqref{d2.7} has been stated for standard Calder\'on-Zygmund  operators with $q=2$ in \cite[p.\,23]{mc97}.
\end{definition}

\begin{theorem}\label{t4.11}

Let $\omega \in A_1(\rn)$, $0<p \leq 1$, $\gamma_p:=n(\frac{1}{p}-1)$, $s:=[\gamma_p]$  and $1< q < \infty$. $T_{\delta}^{\mu} $  be an inhomogeneous Calder\'on-Zygmund operator. Then, for local weighted $(p, q, s, \omega)$-atom $a$ supported in $B=B(x_B,r)$, $T_{\delta}^{\mu} a$ is a $(p, q, s, \lambda, \omega)$-approximate molecule provide that  $\lambda>n(\frac{q}{p}-1)$, $\min \{\mu, \delta\}>\gamma_p$ and there exist $C>0$ and $\eta>0$ for any multi-index $\alpha$ with $|\alpha| \leq s$,

\begin{align}\label{t5.16}
f=(T_{\delta}^{\mu})^\ast\left[(\cdot-x_B)^\alpha\right] \quad \text { satisfies }
&\quad\left(\int_B\left|f(x)-P_B^{s}(f)(x)\right|^{q^{\prime}} \omega(x)^{-{q^{\prime}/q}}d x\right)^{1 /q^{\prime}} \\
&\leq \begin{cases}C{w(B(x_B,r))}^{-\frac{1}{q}}|B(x_B,r)|^{\frac{1}{p}}& \text { if }|\alpha|<\gamma_p\neq s \\C|B|^{{\eta+\frac{1}{p}}}{w(B(x_B,r))}^{-\frac{1}{q}} & \text { if }|\alpha|=\gamma_p=s\end{cases},\nonumber
\end{align}
where $P_B^{s}(f)$ is the polynomial of degree $\leq s$ that has the same moments as $f$ over $B$ up to order $s$.
\end{theorem}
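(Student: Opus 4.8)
The plan is to put $M:=T_\delta^\mu a$ and to verify the three defining conditions $(\mathrm{M}_1)$, $(\mathrm{M}_2)$, $(\mathrm{M}_3)$ of a $(p,q,s,\lambda,\omega)$-approximate molecule (Definition \ref{d2.11}), all with the same ball $B=B(x_B,r)$ and the same $\eta>0$ furnished by \eqref{t5.16}. Since $\omega\in A_1(\rn)\subset A_q(\rn)$ for $q>1$, the operator $T_\delta^\mu$ is bounded on $L^q_\omega(\rn)$, so $(\mathrm{M}_1)$ is immediate from $\|M\|_{L^q_\omega(B)}\le\|T_\delta^\mu a\|_{L^q_\omega(\rn)}\lesssim\|a\|_{L^q_\omega}\le\omega(B)^{1/q-1/p}$ combined with $(A_2)$. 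I also record that $\delta\le1$ together with $\min\{\mu,\delta\}>\gamma_p$ forces $\gamma_p<1$, hence $s=[\gamma_p]=0$, so that for $r<1$ the cancellation of $a$ reduces to the single mean-value condition $\int_\rn a=0$.

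Next I would establish $(\mathrm{M}_2)$ through a pointwise off-diagonal bound. For $x\notin 2B$ and $y\in B$ one has $|x-x_B|\ge 2r>2|y-x_B|$. When $r<1$, I use $\int_\rn a=0$ to write $M(x)=\int_B[K(x,y)-K(x,x_B)]a(y)\,dy$ and apply the smoothness estimate \eqref{e2.66}, obtaining $|M(x)|\lesssim r^\delta|x-x_B|^{-n-\delta}\int_B|a|$; when $r\ge1$ there is no cancellation available and I use instead the size estimate \eqref{e2.663}, which yields $|M(x)|\lesssim |x-x_B|^{-n-\mu}\int_B|a|$. In both cases $\int_B|a|\lesssim|B|\,\omega(B)^{-1/p}$ by Lemma \ref{l3.16}(ii) and $(A_2)$, so $|M(x)|\lesssim r^{n+\delta}|x-x_B|^{-n-\delta}\omega(B)^{-1/p}$ (resp. with $\mu$ in place of $\delta$). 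I then decompose $(2B)^c=\bigcup_{k\ge1}A_k$ with $A_k=\{2^kr\le|x-x_B|<2^{k+1}r\}$, use $|x-x_B|\sim 2^kr$ on $A_k$ and $\omega(A_k)\lesssim 2^{kn}\omega(B)$ from Lemma \ref{l3.16}(i), and sum the resulting geometric series. The summability requires $\lambda<(n+\min\{\mu,\delta\})q-n$, and the admissible range $n(q/p-1)<\lambda<(n+\min\{\mu,\delta\})q-n$ is nonempty precisely because $\min\{\mu,\delta\}>\gamma_p$; the powers of $r$ and $\omega(B)$ then collect to the bound $r^{\lambda/q}\omega(B)^{1/q-1/p}$ required by $(\mathrm{M}_2)$.

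For the moment condition $(\mathrm{M}_3)$, which is automatic when $r\ge1$ by Remark \ref{re1.2}, I take $r<1$ and pass to the adjoint. Since $a\in L^q_{c,s}(\rn;\omega)$ in the sense of Definition \ref{d2.9}, the pairing is well defined and $\int_\rn M(x)(x-x_B)^\alpha\,dx=\langle a,(T_\delta^\mu)^*[(\cdot-x_B)^\alpha]\rangle=\langle a,f\rangle$ with $f$ as in \eqref{t5.16}. Because $a$ has vanishing moments up to order $s$, I may subtract the degree-$\le s$ polynomial $P_B^s(f)$ for free, so that $\int_\rn M(x)(x-x_B)^\alpha\,dx=\int_B a(x)[f(x)-P_B^s(f)(x)]\,dx$. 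A weighted Hölder inequality with exponents $q,q'$, together with $(A_2)$ and the hypothesis \eqref{t5.16}, then gives $\omega(B)^{1/q-1/p}\cdot C\omega(B)^{-1/q}|B|^{1/p}=C(|B|/\omega(B))^{1/p}$ when $|\alpha|<\gamma_p\ne s$, and $\omega(B)^{1/q-1/p}\cdot C|B|^{\eta+1/p}\omega(B)^{-1/q}=C|B|^{\eta+1/p}\omega(B)^{-1/p}=C(|B|/\omega(B))^\beta\omega(B)^\eta$ when $|\alpha|=\gamma_p=s$, using $\beta=\eta+1/p$; these are exactly the two branches of $(\mathrm{M}_3)$.

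The main obstacle I anticipate is $(\mathrm{M}_2)$: organizing the two pointwise decay regimes (for $r<1$ via the smoothness \eqref{e2.66} and the mean value of $a$, and for $r\ge1$ via the size bound \eqref{e2.663}) into one dyadic summation against the weight, and checking that the series converges in the admissible window for $\lambda$ exactly under $\min\{\mu,\delta\}>\gamma_p$. By comparison, $(\mathrm{M}_3)$ is essentially delivered by \eqref{t5.16} once the adjoint pairing is justified through Definition \ref{d2.9} and the atomic cancellation, while $(\mathrm{M}_1)$ is merely the weighted $L^q$ boundedness of $T_\delta^\mu$.
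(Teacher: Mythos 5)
Your proposal follows essentially the same route as the paper's proof: $(\mathrm{M}_1)$ from the weighted $L^q_\omega$-boundedness of $T_\delta^\mu$ together with $(A_2)$; $(\mathrm{M}_2)$ by splitting into $r<1$ (mean value of $a$ plus the smoothness bound \eqref{e2.66}) and $r\geq1$ (the size bound \eqref{e2.663}), summing over the annuli $A_k$ with Lemma \ref{l3.16}(i) and choosing $\lambda$ in the window $n(\frac{q}{p}-1)<\lambda<q(n+\min\{\mu,\delta\})-n$, which is nonempty exactly when $\min\{\mu,\delta\}>\gamma_p$; and $(\mathrm{M}_3)$ via the adjoint pairing of Definition \ref{d2.9}, subtracting $P_B^s(f)$ using the atom's vanishing moments and applying weighted H\"older with \eqref{t5.16}, just as in the paper's \eqref{4.13}. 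Your added observation that $\delta\leq1$ forces $s=0$ is correct and consistent with the paper's implicit use of only the zeroth moment, and the slight imprecision in the $r\geq1$ pointwise bound (writing $r^{n+\mu}$ rather than $r^n$) is harmless since $r\geq1$ and the dyadic sum still collects to $r^{\lambda}\omega(B)^{1-\frac{q}{p}}$.
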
\label{t4.1}

\begin{theorem}\label{t4.101}
Let $\omega \in A_1(\rn)$, $0<p \leq 1$, $T_{\delta}^{\mu} $ be an inhomogeneous Calder\'on-Zygmund operator satisfying $\min \{\mu, \delta\}>\gamma_p:=n(\frac{1}{p}-1)$, $s:=[\gamma_p]$ and there exist $C>0$  and $\eta>0$ such that \eqref {t5.16} holds true for any ball $B \subset \rn $ and any $|\alpha|\leq s.$ If $T_{\delta}^{\mu} $ is bounded on $L^2(\rn)$, then $T_{\delta}^{\mu} $ can be extended to a bounded operator from $h_w^p\left(\mathbb{R}^n\right)$ to itself.
\end{theorem}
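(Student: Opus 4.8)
The plan is to reduce the boundedness of $T_\delta^\mu$ to the uniform molecular estimate of Theorem \ref{d3.66} via the atomic decomposition of $h_\omega^p(\rn)$. First I fix an exponent $q\in(1,\infty)$ for which the cancellation hypothesis \eqref{t5.16} is assumed to hold (this is legitimate since $\omega\in A_1(\rn)\subset A_q(\rn)$), and I choose $\lambda>n(\frac{q}{p}-1)$. Given $f\in h_\omega^p(\rn)$, Lemma \ref{l2.11} produces a decomposition $f=\sum_i\lambda_i a_i$ in $h_\omega^p(\rn)$ with $(p,\infty,s,\omega)$-atoms $a_i$ and $\sum_i|\lambda_i|^p\le C\|f\|_{h_\omega^p}^p$. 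Each $a_i$ is simultaneously a $(p,q,s,\omega)$-atom, because its support condition and vanishing moments are unchanged and its size bound follows from $\|a_i\|_{L_\omega^q}\le\|a_i\|_{L_\omega^\infty}\omega(B_i)^{1/q}\le\omega(B_i)^{1/q-1/p}$. Hence, using $\min\{\mu,\delta\}>\gamma_p$ together with \eqref{t5.16}, Theorem \ref{t4.11} shows that every $T_\delta^\mu a_i$ is a $(p,q,s,\lambda,\omega)$-approximate molecule, and Theorem \ref{d3.66} then gives the uniform bound $\|T_\delta^\mu a_i\|_{h_\omega^p}\le C$ with $C$ independent of $i$.

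The second step is to pass from atoms to general $f$ by a density/continuity argument. Since each $a_i$ is bounded with compact support it lies in $L^2(\rn)$, so $T_\delta^\mu a_i$ is well defined through the assumed $L^2$-boundedness of $T_\delta^\mu$; the same holds for every finite combination $g=\sum_{i=1}^N\lambda_i a_i\in h_\omega^p(\rn)\cap L^2(\rn)$. On this dense subspace the linearity of $T_\delta^\mu$ on $L^2$, the uniform molecular bound, and the $p$-subadditivity of $\|\cdot\|_{h_\omega^p}^p$ (valid since $0<p\le1$) yield
\begin{align*}
\|T_\delta^\mu g\|_{h_\omega^p}^p\le\sum_{i=1}^N|\lambda_i|^p\,\|T_\delta^\mu a_i\|_{h_\omega^p}^p\le C\sum_{i=1}^N|\lambda_i|^p,
\end{align*}
and taking the infimum over all such representations gives $\|T_\delta^\mu g\|_{h_\omega^p}\le C\|g\|_{h_\omega^p}$. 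Because finite combinations of atoms are dense in $h_\omega^p(\rn)$ and $h_\omega^p(\rn)$ is complete, $T_\delta^\mu$ extends uniquely to a bounded operator on all of $h_\omega^p(\rn)$ with the same constant. For a general $f$ the extension is then identified with $\sum_i\lambda_i T_\delta^\mu a_i$ by the pointwise inequality $m_\phi(T_\delta^\mu f)\le\sum_i|\lambda_i|\,m_\phi(T_\delta^\mu a_i)$, obtained exactly as in the proof of Theorem \ref{d3.66} through Fatou's lemma, followed once more by $p$-subadditivity.

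I expect the main obstacle to be the bookkeeping around the definition and continuity of $T_\delta^\mu$ rather than any single inequality. The operator is prescribed only on pairs of functions with disjoint supports together with its $L^2$-theory, so one must carefully verify that the $h_\omega^p$-extension is independent of the chosen atomic decomposition, that it agrees with the $L^2$-action on the dense class, and that $T_\delta^\mu$ commutes with the infinite atomic sum in the $h_\omega^p$-topology; this is precisely where completeness of $h_\omega^p(\rn)$ and the Fatou-type argument must be combined. The remaining technical point is to confirm that the $(p,\infty,s,\omega)$-atoms furnished by Lemma \ref{l2.11} satisfy all quantitative hypotheses of Theorem \ref{t4.11} for the fixed $q$ and $\lambda$, in particular that the required regularity \eqref{t5.16} of $(T_\delta^\mu)^\ast[(\cdot-x_B)^\alpha]$ is available for every $|\alpha|\le s$, so that the molecular bound of Theorem \ref{d3.66} may legitimately be invoked.
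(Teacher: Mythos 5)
Your high-level strategy is the paper's: decompose $f$ into atoms, note each atom is a $(p,q,s,\omega)$-atom, apply Theorem \ref{t4.11} to see that $T_{\delta}^{\mu}a$ is a $(p,q,s,\lambda,\omega)$-approximate molecule, invoke the uniform bound of Theorem \ref{d3.66}, and finish with $p$-subadditivity and density. The gap lies in how you pass from atoms to general $f$. You use Lemma \ref{l2.11}, whose atomic series converges only in $h_{\omega}^{p}(\rn)$ (hence in $\mathcal{S}'(\rn)$), and then restrict to finite combinations $g=\sum_{i=1}^{N}\lambda_i a_i$. Two steps there do not go through. First, the claim that ``taking the infimum over all such representations gives $\|T_{\delta}^{\mu}g\|_{h_{\omega}^{p}}\le C\|g\|_{h_{\omega}^{p}}$'' is unjustified: the near-optimal atomic representations of $g$ produced by the decomposition theorem are in general infinite, and truncating one leaves a remainder that is not a finite combination, so the infimum of $\sum_{i=1}^{N}|\lambda_i|^p$ over \emph{finite} representations of $g$ is not known to be comparable to $\|g\|_{h_{\omega}^{p}}^p$. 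Second, to handle infinite sums you need the identity $T_{\delta}^{\mu}\left(\sum_i\lambda_i a_i\right)=\sum_i\lambda_i T_{\delta}^{\mu}a_i$; since $T_{\delta}^{\mu}$ is given only through its $L^2$-theory (its continuity on $h_{\omega}^{p}$ is precisely what is being proved), this requires the atomic series to converge in $L^2(\rn)$, which Lemma \ref{l2.11} does not provide. Your closing Fatou-type identification is circular for the same reason: the pointwise bound $m_\phi(T_{\delta}^{\mu}f)\le\sum_i|\lambda_i|\,m_\phi(T_{\delta}^{\mu}a_i)$ already presupposes $T_{\delta}^{\mu}f=\sum_i\lambda_i T_{\delta}^{\mu}a_i$ in $\mathcal{S}'(\rn)$. (In Theorem \ref{d3.66} the analogous step is legitimate because the molecular decomposition of Theorem \ref{t3.11} is a pointwise a.e.\ identity; here it is not available.)

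The paper closes exactly this gap with a different decomposition: for $f\in h_{\omega}^{p}(\rn)\cap L^2(\rn)$ it invokes \cite[Theorem 4.8 and Remark 4.9]{YSB20}, which yields $(p,2,s,\omega)$-atoms with $f=\sum_j\lambda_j a_j$ convergent \emph{simultaneously} in $\mathcal{S}'(\rn)$ and in $L^2(\rn)$, together with $\sum_j|\lambda_j|^p\sim\|f\|_{h_{\omega}^{p}}^p$. The assumed $L^2$-boundedness then legitimately gives $T_{\delta}^{\mu}f=\sum_j\lambda_j T_{\delta}^{\mu}a_j$ in $L^2(\rn)$, hence in $\mathcal{S}'(\rn)$, after which the maximal-function estimate plus Theorems \ref{t4.11} and \ref{d3.66} yield $\|T_{\delta}^{\mu}f\|_{h_{\omega}^{p}}\lesssim\|f\|_{h_{\omega}^{p}}$ on all of $h_{\omega}^{p}(\rn)\cap L^2(\rn)$; the extension to $h_{\omega}^{p}(\rn)$ follows from the density of this intersection (\cite[Corollary 4.10]{YSB20}), not from density of finite atomic combinations. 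To repair your argument, replace Lemma \ref{l2.11} by this $L^2$-convergent decomposition with $q=2$; your observation that such atoms are also $(p,q,s,\omega)$-atoms is then exactly what feeds Theorem \ref{t4.11}.
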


\begin{proof}[Proof of Theorem \ref{t4.11}]
We assume local weighted $(p,q,s ,\omega)$ -atom a is supported in $B(x_{B}, r)$. We shall show
$T_{\delta}^{\mu}a$ is a $(p, q, s, \lambda, \omega)$-approximate molecule associated with $2B=B(x_B,2r)$, where $1\leq q\leq \infty$ and $\lambda>n(\frac{q}{p}-1)$. By the $L_{\omega} ^q(\rn)$-boundedness of $T_{\delta}^{\mu}$ (see \cite[Theorem 1.9]{jcjubuhd23}) and $(A_2)$ condition of atom $a$, we obtain
\begin{align}\label{4.11}
\lf(\int_{2B}|T_{\delta}^{\mu}a(x)|^{q}w(x)dx\r)^\frac{1}{q}\lesssim \|a\|_{L_{\omega}^q}\lesssim w(B(x_{B}, r))^{\frac{1}{q}-\frac{1}{p}}.
\end{align}
Form this, condition  $(\mathrm{M}_1)$  holds true for $T_{\delta}^{\mu}a$.\\

Let us now verify   $(\mathrm{M}_2)$ condition of $T_{\delta}^{\mu}a$ by two cases: $r\geq1$ and $r<1$.

\textbf{Case I : prove $(\mathrm{M}_2)$ under  $r\geq1$ }.
If $|x-x_{B}|\geq 2r$,  by \eqref{e2.66} and  Definition \ref{d2.4}, we obtain
\begin{align}\label{4.011}
&|T_{\delta}^{\mu}a(x)\leq\int_B |K(x,y)|a(y)dy\leq C|x-x_B|^{-n-\mu}\int_B |a(y)|dy\\\nonumber
&\qquad\quad\lesssim|x-x_B|^{-n-\mu}\left(\int_B |a(y)|^q \omega(y)dy\right)^\frac{1}{q}\left(\int_B  \omega(y)^{-\frac{1}{q-1}}dy\right)^{1-\frac{1}{q}}\\\nonumber
&\qquad\quad\lesssim|x-x_B|^{-n-\mu}\omega(B)^{\frac{1}{q}-\frac{1}{p}}\frac{|B|}{\omega(B)^\frac{1}{q}}\\\nonumber
& \qquad\quad\lesssim|x-x_B|^{-n-\mu}\frac{|B|}{\omega(B)^{\frac{1}{p}}}\nonumber.
\end{align}

Since $\gamma_p<\mu$, then we have $n(\frac{q}{p}-1)<q(n+\mu)-n$. Therefore we  can pick a $\lambda>0$ such that
$$n\left(\frac{q}{p}-1\right)<\lambda<q(n+\mu)-n.$$ Let $A_k :=\{2^kr \leq\left|x-x_B\right| < 2^{k+1}r\}$, $k=1,2,3, \cdots$. By condition $(\mathrm{M}_2)$ and Lemma \ref{l3.16}(i) with $\omega \in A_1(\rn)$, we have
\begin{align}\label{4.112}
&\int_{|x-x_{B}|\geq 2r}|T_{\delta}^{\mu}a(x)|^{q}|x-x_B|^{\lambda}\omega(x)dx \nonumber
\lesssim\lf(\frac{|B|}{\omega(B)^{\frac{1}{p}}}\r)^q\int_{(2B)^c}|x-x_B|^{\lambda-q(n+\mu)}\omega(x)dx \nonumber\\
&\lesssim\lf(\frac{|B|}{\omega(B)^{\frac{1}{p}}}\r)^q\sum_{k=1}^{\infty}\int_{A_k}|x-x_B|^{\lambda-q(n+\mu)}\omega(x)dx
\lesssim\lf(\frac{|B|}{\omega(B)^{\frac{1}{p}}}\r)^q\sum_{k=1}^{\infty}(2^kr)^{\lambda-q(n+\mu)}(2^{kn})\omega(B)\nonumber\\
&\lesssim\sum_{k=1}^{\infty}r^{\lambda-qn-q\mu+qn}2^{k(n+\lambda-qn-q\mu)}\omega (B)^{-\frac{q}{p}}\omega(B)
\lesssim\sum_{k=1}^{\infty}r^{\lambda}2^{k(n+\lambda-qn-q\mu)}\omega (B)^{1-\frac{q}{p}}\nonumber\\
&\lesssim r^{\lambda}\omega (B)^{1-\frac{q}{p}}.
\end{align}

\textbf{Case II : prove $(\mathrm{M}_2)$ under  $r<1$ }.  Since $\gamma_p<\delta$, then we have $n\left(\frac{q}{p}-1 \right)<q(n+\delta)-n$. Therefore we  can pick a $\lambda>0$ such that
$$n\left(\frac{q}{p}-1\right)<\lambda<q(n+\delta)-n.$$

By the vanishing moment condition of the atom, \eqref{e2.66},  Lemma \ref{l3.16} with $\omega \in A_1(\rn)$, condition $(\mathrm{M}_2)$  and general Minkowski's
inequality, we have
\begin{align}\label{4.112}
&\int_{(2B)^c}|T_{\delta}^{\mu}a(x)|^{q}|x-x_B|^{\lambda}\omega(x)dx\\\nonumber
&=\sum_{k=1}^{\infty}\int_{A_k}\left|\int_B|K(x,y)-K(x,x_B)|a(y)dy\right|^q|x-x_B|^{\lambda}\omega(x)dx\\\nonumber
&\leq\sum_{k=1}^{\infty}(2^{k+1}r)^\lambda\left\{\int_B |a(y)|\left[\int_{A_k}\left|K(x,y)-K(x,x_B)\right|^q\omega(x)dx\right]^\frac{1}{q}dy\right\}^q\\\nonumber
&\lesssim\sum_{k=1}^{\infty}(2^{k}r)^\lambda\left\{\int_B |a(y)|\left[\int_{A_k}\frac{|y-x_B|^{q\delta}}{|x-x_B|^{q(n+\delta)}}\omega(x)dx\right]^\frac{1}{q}dy\right\}^q\\\nonumber
&\lesssim\sum_{k=1}^{\infty}(2^{k}r)^{\lambda}\frac{r^{q\delta }}{(2^{k}r)^{(n+\delta )q}}2^{kn}\omega(B)r^{nq}\omega(B)^{-\frac{q}{p}}\\\nonumber
&\lesssim\sum_{k=1}^{\infty}(2^{k}r)^{\lambda} 2^{-k(n+\delta)q}\omega(B)^{1-\frac{q}{p}}\\\nonumber
&\lesssim\sum_{k=1}^{\infty} 2^{k(\lambda+n)}2^{-k(n+\delta)q}r^{\lambda}\omega(B)^{1-\frac{q}{p}}\\\nonumber
&\lesssim\sum_{k=1}^{\infty}2^{k(\lambda+n-nq-q\delta)}r^{\lambda}\omega(B)^{1-\frac{q}{p}}\\\nonumber
&\lesssim r^{\lambda}\omega (B)^{1-\frac{q}{p}}\nonumber.
\end{align}

Therefore,  condition $(\mathrm{M}_2)$   holds true  for $r<1$. Combining the estimates of {cases I and II}, we finally obtain that $T_{\delta}^{\mu}a$ satisfies condition  $ (\mathrm{M}_2)$.

Finally, let us  verify  $(\mathrm{M}_3)$ when $r<1$. By \eqref{d2.7}, \eqref{t5.16} and setting $f=(T_{\delta}^{\mu})^*\left[\left(\cdot-x_B\right)^\alpha\right]$, we have
\begin{align}\label{4.13}
\left|\int_{\rn} T_{\delta}^{\mu} a(x)\left(x-x_B\right)^\alpha d x\right| = \nonumber |\langle f, a\rangle| &\leq\left(\int_B\left|f(x)-P_B^{s}(f)(x)\right|^{q^{\prime}}\omega(x)^{-{q^{\prime}/q}} d x\right)^{1 / q^{\prime}}\|a\|_{L_{\omega}^q(B)} \\
&\lesssim \begin{cases}\left(\frac{\left|B\right|}{\omega(B)}\right)^{\frac{1}{p}}& \text { if }|\alpha|<\gamma_p\neq s \\\left(\frac{\left|B\right|}{\omega(B)}\right)^\beta \omega(B)^\eta & \text { if }|\alpha|=\gamma_p=s\end{cases},
\end{align}
where $\beta=\eta+\frac{1}{p}$.

Thus, condition $(\mathrm{M}_3)$   holds true  for $r<1$. Therefore, $T_{\delta}^{\mu}a(x)$ is a $(p, q, \eta, \lambda, \omega)$-approximate molecule and we finish the proof of Theorem  \ref{t4.11}.

\end{proof}
\begin{proof}[Proof of Theorem \ref{t4.101}]
 Pick a function  $f\in h^p_\omega(\rn)\cap L^2(\rn)$, by \cite[Theorem 4.8 and Remark 4.9]{YSB20}, we know that there exist local weighted $(p,2,s,\omega)$-atoms $\{a_j\}$ and $\{ \lambda_j\} \subset \mathbb{C} $ such that  $f=\sum_{j=1}^{\infty} \lambda_j a_j$  holds true both in $\mathcal{S}^{\prime}(\rn)$ and $L^2(\rn)$, and
\begin{align}\label{eq1.666}
\sum_{j=1}^{\infty}|\lambda_j|^p\sim \| f\|_{h_w^{p}}^p.
\end{align}

Then, by  the $L^2(\rn)$-boundedness of inhomogeneous  Calder\'on-Zygmund operator $T_{\delta}^{\mu}$, we have
\begin{align*}
T_{\delta}^{\mu} f=\sum_{j=1}^{\infty} \lambda_jT_{\delta}^{\mu} a_j  \quad  \mathrm{holds}\,\, \mathrm{true} \,\, \mathrm{both} \,\, \mathrm{in} \,\,\mathcal{S}^{\prime}(\rn) \,\,\mathrm{and} \,\,L^2(\rn).
\end{align*}

Let $\phi \in \mathcal{S}(\rn)$ with $\int_{\rn}\phi(x)dx\neq0$. Then, for any $x\in{ \rn}$, we further obtain

\begin{align}\label{fz6}
& m_\phi( T_{\delta}^{\mu}f)(x)\leq \sum_{j=1}^{\infty}|\lambda_j| m_\phi T_{\delta}^{\mu}a_j(x).\nonumber
\end{align}
From this, $0<p\leq1$, Theorem \ref{t4.11}, Theorem \ref{d3.66} and \eqref{eq1.666},  it follows that
\begin{align}
&\|T_{\delta}^{\mu}f\|_{h_w^{p}}^p=\|m_\phi( T_{\delta}^{\mu}f)\|_{L_w^{p}}^p\leq \sum_{j=1}^{\infty}|\lambda_j|\|T_{\delta}^{\mu} a_j\|_{h_w^{p}}^p\lesssim\|f\|_{h_w^{p}}^p,\nonumber
\end{align}
which together with the fact that ${h_w^{p}}(\rn)\bigcap L^2(\rn)$ is dense in ${h_w^{p}}(\rn)$ (see \cite[Corollary 4.10]{YSB20}),  we further obtain
\begin{align*}
\|T_{\delta}^{\mu}f\|_{h_w^{p}}^p\lesssim\|f\|_{h_w^{p}}^p \quad \mathrm{for}\,\, \mathrm{any} \,\,f \in {h_w^{p}}(\rn).
\end{align*}
We finished the proof of Theorem  \ref{t4.101}.
\end{proof}

\bigskip\medskip

\noindent Haijing Zhao, Xuechun Yang and Baode Li (Corresponding author),
\medskip

\noindent College of Mathematics and System Sciences\\
 Xinjiang University\\
 Urumqi, 830017\\
P. R. China
\smallskip

\noindent{E-mail }:\\
\texttt{1845423133@qq.com} (Haijing Zhao)\\
\texttt{2760978447@qq.com} (Xuechun Yang)\\
\texttt{baodeli@xju.edu.cn} (Baode Li)\\
\bigskip \medskip

\end{document}